\documentclass{siamart251216}

\usepackage{amsfonts,amssymb,mathtools}
\usepackage{bm}
\usepackage{enumitem}
\usepackage{microtype}
\usepackage{tikz}
\usetikzlibrary{calc}
\usetikzlibrary{positioning}

\newsiamremark{remark}{Remark}
\crefname{remark}{Remark}{Remarks}
\Crefname{remark}{Remark}{Remarks}

\newcommand{\G}{\mathcal G}
\newcommand{\E}{\mathcal E}
\newcommand{\V}{\mathcal V}
\newcommand{\T}{\mathbb T}
\newcommand{\R}{\mathbb R}
\newcommand{\C}{\mathbb C}
\newcommand{\Z}{\mathbb Z}

\newcommand{\supp}{\operatorname{supp}}
\newcommand{\spec}{\operatorname{spec}}
\newcommand{\dd}{\,\mathrm d}

\title{Arithmetic Scar Accessibility and Generic Schr\"odinger Control on Metric Graphs}

\author{Binh T. Nguyen\thanks{Faculty of Mathematics and Computer Science, University of Science, Vietnam National University Ho Chi Minh City, Ho Chi Minh City, 700000, Vietnam (\email{ngtbinh@hcmus.edu.vn}).}}

\headers{Arithmetic Scar Accessibility on Metric Graphs}{Binh T. Nguyen}

\begin{document}
\maketitle

\begin{abstract}
We study internal exact controllability of the free Schr\"odinger equation on
finite compact metric graphs with Kirchhoff conditions at interior vertices and
a fixed assignment of Dirichlet or Neumann conditions at exterior vertices.
Our main result proves that, for every rationally independent metric, the Graph
Geometric Control Condition (GGCC) is necessary and sufficient for exact
controllability in every positive time.  Since rationally dependent length
vectors form a Lebesgue-null set, this yields a generic metric characterization
of exact controllability.

The necessity mechanism is spectral and arithmetic.  For a prescribed
primitive cycle or exterior-to-exterior path, we introduce a fixed-metric
accessibility criterion obtained by intersecting the regular scar stratum of
the secular set with the orbit-closure torus of the target length vector.
Whenever this intersection is nonempty, the prescribed scar is realized
asymptotically by exact high-frequency eigenfunctions of the target metric.
The criterion also applies to certain resonant metrics beyond the rationally
independent class.  Under an intrinsic Diophantine condition on the
orbit-closure flow, the construction is quantitative: the mass outside the
prescribed support decays polynomially along an exact eigensequence, which
yields a polynomial degeneration law for finite-frequency observation whenever
that support is uncontrolled.  Rationally independent metrics have full
phase-orbit closure, making every primitive obstruction accessible; combined
with the known graph-theoretic characterization and sufficiency of GGCC, this
gives the stated controllability characterization.
\end{abstract}

\begin{keywords}
exact controllability, observability, Schr\"odinger equation, metric graphs, quantum graphs, graph geometric control condition, scarred eigenfunctions
\end{keywords}

\begin{MSCcodes}
93B05, 93B07, 35Q41, 34B45, 81Q35
\end{MSCcodes}

\section{Introduction}
\label{sec:intro}

The Graph Geometric Control Condition (GGCC), introduced by Ammari and colleagues ~\cite{AmmariDucaJolyLeBalch2025}, gives a purely
graph-theoretic condition ensuring internal exact controllability of the
Schr\"odinger equation on a finite metric graph.  For the free equation,
however, GGCC is not necessary at every metric: arithmetic exceptional length
vectors may remain controllable even when an uncontrolled primitive subgraph is
present.  This led to Conjecture~7.7 of~\cite{AmmariDucaJolyLeBalch2025}, which
asks whether GGCC becomes necessary, and hence characterizes controllability,
for generic edge lengths.  We prove this generic converse for the free
Laplacian, with arbitrary fixed Dirichlet--Neumann conditions at the exterior
vertices.

The difficulty is that failure of GGCC is topological, whereas failure of
Schr\"odinger observability must be witnessed spectrally.  The graph-theoretic
characterization of GGCC produces an uncontrolled simple cycle or an
uncontrolled simple path joining two exterior vertices.  To turn such a
primitive obstruction into nonobservability, one must construct \emph{exact}
high-frequency eigenfunctions of the \emph{fixed target metric} whose mass
concentrates on that prescribed subgraph.  Colin de Verdi`ere's
determinant-manifold theory~\cite{CdV2015} gives the classical generic
minimal-support picture for standard Kirchhoff graphs, including simple cycles
and simple paths between degree-one vertices. Accordingly, the cycle/path classification belongs to the established spectral
theory.  The problem addressed here is different: for a fixed metric and a
prescribed primitive support, we ask whether the corresponding regular scar is
accessible by the exact spectrum of that same metric.

Our first contribution is a fixed-metric arithmetic accessibility criterion.
For a target length vector \(\ell\), let \(H_\ell\) denote the closure of the
phase flow \(k\mapsto[k\ell]\), and for a primitive support \(K\), let
\(\mathcal R_K\) denote the regular secular phases carrying a simple
\(K\)-supported eigendirection.  We prove that a regular scar phase can be
reached by exact target spectral points whenever it belongs to the orbit
closure.  This motivates the regular arithmetic accessibility condition
\[
H_\ell\cap\mathcal R_K\neq\varnothing.
\]
It is metric-specific and obstruction-specific: it may hold for resonant
metrics, while rational independence is only the special case
\(H_\ell=\T^\E\), in which every regular primitive scar is accessible.

Our second contribution is quantitative.  Under an intrinsic Diophantine
condition on the linear flow in \(H_\ell\), quantitative torus filling and the
local secular geometry yield exact spectral crossings approaching a prescribed
regular scar at a polynomial rate.  Smoothness of the corresponding simple
secular eigenbranch gives a quadratic leakage law, so the off-support
\(L^2\)-mass decays polynomially along the resulting exact eigensequence.  For
an uncontrolled primitive support, this produces a polynomial degeneration of
the finite-frequency observation level.

Our third contribution is the control-theoretic consequence.  Rationally
independent metrics have full phase-orbit closure and hence make every primitive
obstruction arising from failure of GGCC regularly accessible.  The resulting
scarred eigensequence violates observability at every positive time.  Combined
with the GGCC sufficiency theorem of~\cite{AmmariDucaJolyLeBalch2025}, this
proves that GGCC is necessary and sufficient for exact controllability for every
rationally independent metric.  Since rationally dependent positive length
vectors form a Lebesgue-null set, the same characterization consequently holds
for Lebesgue-almost every positive length vector.  In this way, the fixed-metric
scar theorem and its quantitative Diophantine refinement provide the spectral
mechanism underlying the generic GGCC characterization.

\subsection{Main results}
\label{subsec:main-results}

Throughout, \(\G\) is a finite compact connected metric graph with Kirchhoff
conditions at interior vertices and a fixed Dirichlet--Neumann assignment at
exterior vertices.  The control set \(\omega\) is a nonempty open subset of the
disjoint union of edge interiors.  The definitions of \(H_\ell\),
\(\mathcal R_K\), and the regular arithmetic accessibility condition
\(\operatorname{RAC}(K,\ell)\) are given in 
\Cref{sec:prelim,sec:secular-transfer}.

Our first main result is a fixed-metric scar theorem, as follows.
\begin{theorem}[Arithmetic primitive-scar accessibility]
\label{thm:intro-arithmetic-scar}
Let \(K\subset\G\) be a proper simple cycle or a proper simple path joining two
exterior vertices.  If \(\operatorname{RAC}(K,\ell)\) holds, then there exist
normalized exact eigenfunctions \(u_n\) and frequencies \(k_n\to\infty\) such
that
\begin{equation}
\label{eq:intro-arithmetic-scar}
-\Delta_{\G,\ell}u_n=k_n^2u_n,
\qquad
\|u_n\|_{L^2(\G\setminus K)}\longrightarrow0,
\qquad
|u_n|^2\dd x\rightharpoonup\frac1{L_K}\,\dd x\big|_K.
\end{equation}
For exterior paths, all four endpoint types \(DD,DN,ND,NN\) are allowed.
\end{theorem}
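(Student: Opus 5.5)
The strategy is to use the secular-set (phase-torus) description of the spectrum and transfer an eigenvector at a scar phase to exact eigenfunctions of the target metric. We parametrize edge solutions at frequency \(k\) by the unitary scattering matrix \(S\). This matrix is built from the Kirchhoff vertex scattering matrices and the Dirichlet/Neumann reflections \(\mp1\), and it is independent of \(k\). Then \(k^2\in\spec(-\Delta_{\G,\ell})\) iff \(\det(I-e^{i\Theta}S)=0\) at \(\Theta=[k\ell]\in\T^\E\), lifted to directed edges. Eigenfunctions correspond, via an explicit reconstruction, to vectors \(a\in\ker(I-e^{i\Theta}S)\).

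\textbf{Step 1: the scar phase.} By \(\operatorname{RAC}(K,\ell)\), pick \(\theta^\ast\in H_\ell\cap\mathcal R_K\). By the definition of \(\mathcal R_K\), \(\theta^\ast\) is a regular point of the secular set \(\Sigma=\{\det(I-e^{i\Theta}S)=0\}\), and \(\ker(I-e^{i\theta^\ast}S)\) is one-dimensional. It is spanned by an amplitude vector \(a^\ast\) supported on the directed edges of \(K\). For a cycle, \(a^\ast\) is a traveling or standing wave whose phases around \(K\) close up. For an exterior path, \(a^\ast\) is a standing wave reflecting at the two endpoints. The sign conventions cover \(DD,DN,ND,NN\), and at each interior vertex of \(K\) the Kirchhoff scattering of \(a^\ast\) produces nothing on edges off \(K\).

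\textbf{Step 2: exact spectral points approaching \(\theta^\ast\).} Since \(\theta^\ast\) lies in the orbit closure \(H_\ell\), there are \(k\to\infty\) with \([k\ell]\to\theta^\ast\). These are generally not eigenvalues, and this is the main obstacle. We must produce \(k_n\to\infty\) with \([k_n\ell]\in\Sigma\) and \([k_n\ell]\to\theta^\ast\). Near the regular point, \(\Sigma\) is a smooth hypersurface \(\{F=0\}\), with \(F\) given by the simple eigenvalue branch \(\lambda(\Theta)\) of \(e^{i\Theta}S\) near \(1\). Moreover \(\partial_s F(\Theta+s\ell)\neq0\), because \(\frac{d}{ds}\lambda(\Theta+s\ell)=i\lambda\langle a,\operatorname{diag}(\ell)a\rangle\) and this quantity is nonzero since \(\ell>0\) and \(a\neq0\). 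So \(\Sigma\) is transversal to the flow direction \(\ell\), which lies in the tangent space of \(H_\ell\). Take a short flow segment \(s\mapsto[(k+s)\ell]\), \(|s|\le\delta\), starting near \(\theta^\ast\). By transversality and the implicit function theorem, this segment crosses \(\Sigma\) at some \(s\) with \(|s|=O(\operatorname{dist}([k\ell],\theta^\ast))\). Setting \(k_n=k+s\) gives exact eigenvalues \(k_n^2\) with \([k_n\ell]\to\theta^\ast\). Multiplicity is harmless: near \(\theta^\ast\) the kernel stays one-dimensional.

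\textbf{Step 3: transfer and concentration.} By analytic perturbation of the simple eigenvalue, there is a continuous choice of normalized \(a(\Theta)\in\ker(I-e^{i\Theta}S)\) for \(\Theta\in\Sigma\) near \(\theta^\ast\), with \(a(\Theta)\to a^\ast\). Let \(u_n\) be the eigenfunction built from \(a_n=a([k_n\ell])\). On each edge \(e\), \(u_n\) is a combination of \(e^{\pm ik_nx}\) with coefficients from \(a_n\). Its edge mass is \(\ell_e(|a_e^+|^2+|a_e^-|^2)+O(1/k_n)\), because the cross terms oscillate. Therefore the off-\(K\) mass is bounded by \(C\sum_{e\notin K}|a_{n,e}|^2+O(1/k_n)\to0\) after normalization, which gives the second claim of \eqref{eq:intro-arithmetic-scar}.

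On \(K\), flux conservation forces \(|a^{\ast,+}_e|=|a^{\ast,-}_e|\) or equal traveling amplitudes along the cycle, so the amplitude moduli are constant along \(K\). For a test function \(\varphi\), \(\int\varphi|u_n|^2\) equals the constant amplitude times \(\int_K\varphi\), up to the off-\(K\) mass and oscillatory integrals \(\int\varphi e^{\pm2ik_nx}\). The oscillatory integrals vanish by Riemann--Lebesgue. Normalization fixes the constant as \(1/L_K\), giving the weak limit.

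The main work lies in Step 2: verifying transversality on the regular stratum, and checking that the endpoint conditions give a simple kernel in each of the four path types. This regularity is presumably what the paper's definition of \(\mathcal R_K\) encodes.
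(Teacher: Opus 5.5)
Your proof is correct, but both main steps are organized differently from the paper's.

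\textbf{Exact crossings.} The paper proves a general equivalence (\cref{thm:relative-crossing}). It restricts the eigenphase $\varphi$ to $H_\ell$ and uses the implicit function theorem there to make $\{\varphi=0\}\cap H_\ell$ a transverse hypersurface. It then builds a relative flow box with an incoming slab, enters the slab using density of forward orbit tails, and shrinks nested boxes. You instead take near-returns $[k\ell]\to\theta^\ast$ and apply the intermediate value theorem to $s\mapsto\varphi([k\ell]+s\ell)$. The required lower bound on $D_\ell\varphi$ comes from your Hellmann--Feynman computation; in fact $D_\ell\varphi\ge\min_e\ell_e$ along the whole simple branch, since $\sum_e I_e(a)=\|a\|^2=1$. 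Your route needs no manifold structure on $H_\ell$, because the crossing point lies on the orbit automatically. It also gives $\operatorname{dist}([k_n\ell],\theta^\ast)\le C\operatorname{dist}([k\ell],\theta^\ast)$, so any shrinking-target rate for returns transfers directly to \cref{thm:quantitative-crossing}. The paper's flow-box packaging buys the clean equivalence (i)$\Leftrightarrow$(ii) instead.

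\textbf{Weak limit.} The paper argues indirectly: subsequential limits are supported in $K$, \cref{lem:support-law} excludes proper subsupports, and \cref{cor:primitive-measure-unique} fixes the density. You compute the limit explicitly as $M_\star^{-1}\sum_e I_e(a^\ast)\dd x|_e$. This gives full convergence at once, but it requires $I_e(a^\ast)$ to be constant on the edges of $K$.

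\textbf{Points to tighten.}
\begin{enumerate}
\item The hypothesis $\theta^\ast\in H_\ell$ only gives real $k_j$ with $[k_j\ell]\to\theta^\ast$. To get $k_j\to\infty$ you need every forward tail of the orbit to be dense in $H_\ell$. The paper supplies this: the closure of a forward orbit is a compact subsemigroup of the compact group $H_\ell$, hence a subgroup.
\item Flux conservation does not by itself give constancy of $I_e(a^\ast)$ along $K$. It only constrains the differences $|a^\ast_{e,+}|^2-|a^\ast_{e,-}|^2$ and does not compare intensities on different edges. The correct reason is Kirchhoff scattering with vanishing off-$K$ output. At an interior vertex of $K$ of degree $d>2$, the two incoming $K$-amplitudes must sum to zero, so all four $K$-moduli there coincide. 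At $d=2$, $S_v$ simply swaps them. This is the two-edge balance of \cref{lem:support-law}, applied to the exact vector $a^\ast$ rather than to limits.
\item Simplicity at $\theta^\ast$ is already part of the hypothesis $\theta^\ast\in\mathcal R_K$. So no endpoint-type check is needed in this theorem; that work belongs to \cref{cor:regular-scar-nonempty}.
\end{enumerate}
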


The condition \(\operatorname{RAC}(K,\ell)\) is equivalent to the existence of
a regular \(K\)-scar phase satisfying every integer resonance relation of
\(\ell\); see \cref{lem:orbit-closure-lattice,def:RAC}.  In particular,
rationally independent metrics satisfy \(\operatorname{RAC}(K,\ell)\) for
every proper primitive \(K\).

The second result quantifies \eqref{eq:intro-arithmetic-scar}.  Let
\(r_\ell=\dim H_\ell\).  Under the intrinsic Diophantine hypothesis stated in
\cref{def:intrinsic-diophantine}, we obtain a sequence in
\cref{thm:intro-arithmetic-scar} satisfying
\begin{equation}
\label{eq:intro-quantitative-scar}
\|u_n\|_{L^2(\G\setminus K)}^2
\le C k_n^{-2/\tau}.
\end{equation}
Estimate \eqref{eq:intro-quantitative-scar} is the quantitative conclusion of
\cref{thm:quantitative-scar}; the periodic case \(r_\ell=1\) is treated
separately in \cref{thm:quantitative-crossing}.

To express the control consequence, define the truncated observation level
\(\mathfrak o_T(\Lambda)\) in \eqref{eq:truncated-observation-level}.  If GGCC
fails and an uncontrolled primitive \(K\) satisfies the quantitative arithmetic
hypotheses, then along \(\Lambda_n=k_n^2\),
\begin{equation}
\label{eq:intro-observation-degeneration}
\mathfrak o_T(\Lambda_n)
\le C T\Lambda_n^{-1/\tau}.
\end{equation}
Estimate \eqref{eq:intro-observation-degeneration} shows that the inverse finite-frequency observation level grows at least
polynomially along a spectral subsequence.

Finally, rational independence makes \(H_\ell=\T^\E\), so every proper
primitive obstruction is regularly accessible.  Combining
\cref{thm:intro-arithmetic-scar} with the GGCC topology and the sufficiency
theorem of Ammari--Duca--Joly--Le Balc'h yields the generic control
characterization.

\begin{theorem}[Generic GGCC characterization]
\label{thm:intro-control}
If \(\ell\in(0,\infty)^\E\) is rationally independent, then
\((\G,\omega)\) satisfies GGCC if and only if the free internally controlled
Schr\"odinger equation is exactly controllable in every time \(T>0\).  Hence, the
same equivalence holds for Lebesgue-almost every positive length vector.
\end{theorem}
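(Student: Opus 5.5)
The proof has two directions, and the almost-everywhere statement follows from a measure count.

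\emph{Sufficiency.} Assume GGCC holds. We invoke the sufficiency theorem of Ammari--Duca--Joly--Le Balc'h~\cite{AmmariDucaJolyLeBalch2025}. It gives exact controllability in every time \(T>0\) for any metric, and in particular for rationally independent \(\ell\). Nothing arithmetic is used here. We only check that their framework covers our setting: Kirchhoff conditions at interior vertices, a fixed Dirichlet--Neumann assignment at exterior vertices, and \(\omega\) an open subset of the edge interiors.

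\emph{Necessity.} Argue by contraposition. Suppose GGCC fails. By the graph-theoretic characterization of GGCC from the same reference, there is a primitive uncontrolled subgraph \(K\). It is either a simple cycle or a simple path joining two exterior vertices, with \(K\cap\omega=\varnothing\). Since \(\omega\) is nonempty and lies in the edge interiors, \(K\) is proper. Because \(\ell\) is rationally independent, no nontrivial integer resonance relation holds. By \cref{lem:orbit-closure-lattice}, \(H_\ell=\T^\E\). The regular scar stratum \(\mathcal R_K\) is nonempty; this nonemptiness is part of the secular-transfer analysis of \Cref{sec:secular-transfer}. Hence \(H_\ell\cap\mathcal R_K\neq\varnothing\), that is, \(\operatorname{RAC}(K,\ell)\) holds.

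\Cref{thm:intro-arithmetic-scar} now supplies normalized eigenfunctions \(u_n\) with \(-\Delta u_n=k_n^2u_n\) and \(\|u_n\|_{L^2(\G\setminus K)}\to0\). Exact controllability in time \(T\) is equivalent, by HUM, to the observability inequality
\[
\|u_0\|_{L^2}^2\le C_T\int_0^T\|e^{it\Delta}u_0\|_{L^2(\omega)}^2\dd t .
\]
Test it on \(u_0=u_n\). Then \(e^{it\Delta}u_n=e^{-ik_n^2t}u_n\), so the right-hand side equals
\[
C_T\,T\,\|u_n\|^2_{L^2(\omega)}\le C_T\,T\,\|u_n\|^2_{L^2(\G\setminus K)}\to0,
\]
while the left-hand side is \(1\). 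This contradicts observability for every \(T>0\). Hence exact controllability fails in every positive time.

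The step needing the most care is the input \(\mathcal R_K\neq\varnothing\) for each primitive type, including paths of type \(DD\), \(DN\), \(ND\), \(NN\). If \Cref{sec:secular-transfer} does not state it explicitly, I would build an explicit phase in \(\T^\E\). On the edges of \(K\), choose phases so that the cycle or path carries a standing wave vanishing at every vertex of \(K\) that meets \(\G\setminus K\). This is possible because a sine on each edge with the right phase satisfies the Kirchhoff conditions along \(K\) and the correct endpoint conditions. On the edges off \(K\), choose generic phases so that the secular kernel is one-dimensional. Then check simplicity and regularity of the secular determinant at that point.

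\emph{Almost every metric.} The rationally dependent length vectors form the set
\[
\bigcup_{m\in\Z^\E\setminus\{0\}}\{\ell:\ m\cdot\ell=0\}.
\]
This is a countable union of hyperplanes, hence Lebesgue-null. The equivalence therefore holds for almost every \(\ell\in(0,\infty)^\E\).
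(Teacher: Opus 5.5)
Your proposal is correct and follows the paper's proof essentially step for step. Sufficiency comes from the Ammari--Duca--Joly--Le Balc'h theorem. For necessity you extract a proper uncontrolled primitive \(K\), obtain \(\operatorname{RAC}(K,\ell)\) from \(H_\ell=\T^\E\) together with \(\mathcal R_K\neq\varnothing\), test observability on the stationary scarred eigenfunctions, and finish with the null-set count of rationally dependent metrics. The paper states \(\mathcal R_K\neq\varnothing\) explicitly as \cref{cor:regular-scar-nonempty}, via the model scars and \cref{prop:simplicity}, so your fallback phase-space construction is not needed; it would work if completed by shifting all off-\(K\) phases by a common small amount, since the first-order eigenphase splitting on \(\ker(I-U_B)\) is then the off-\(K\) intensity form, which degenerates only on the one-dimensional \(K\)-supported direction.
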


\subsection{Related Work}
\label{subsec:related-work}

\paragraph{Graph control and the GGCC problem}
Control and observation of PDEs on metric graphs have been studied through
boundary, vertex, and distributed control mechanisms.  For wave equations,
representative recent results include arithmetic controllability on star
networks~\cite{Dager2016}, constructive and inverse formulations on metric
graphs~\cite{Avdonin2019IFAC}, exact controllability on trees and more general
graphs~\cite{AvdoninEdwardLeugering2023,AvdoninZhao2021,AvdoninZhao2022,AvdoninEdward2025}.  For Schr\"odinger dynamics, bilinear controllability on
compact and infinite graphs was developed in
\cite{AmmariDuca2020JMP,AmmariDuca2021IJC,Duca2020SICON,Duca2021Automatica};
related quantum-control models on graph-like manifolds appear in
\cite{BalmasedaLonigroPerezPardo2023}.  These works provide important control
mechanisms but do not characterize internal controllability by a graph-geometric
condition for the free Schr\"odinger flow.  Ammari et al.
\cite{AmmariDucaJolyLeBalch2025} introduced GGCC, proved its graph-theoretic
characterization and sufficiency for Schr\"odinger controllability, exhibited
arithmetic exceptional metrics showing that necessity fails pointwise, and
proved a converse in a rationally independent Dirichlet-star setting.  Their
Conjecture~7.7 asks for the generic converse on arbitrary finite graphs.  Our
control theorem proves that converse for the free Laplacian with arbitrary
fixed exterior Dirichlet--Neumann assignments.  The proof also identifies a
metric-specific regular accessibility condition that explains why generic
metrics and exceptional arithmetic metrics need not behave alike.

\paragraph{Scarring and fixed-metric accessibility}
Colin de Verdi`ere~\cite{CdV2015} relates semiclassical measures on quantum
graphs to the Gauss map of the determinant manifold and, for generic metrics in
the standard Kirchhoff setting, identifies simple cycles and simple paths
between degree-one vertices as the minimal supports.  This provides the main spectral background for our analysis, while the present
work concerns the distinct fixed-metric problem of determining whether a
prescribed primitive scar is accessible by the exact spectrum.
 Earlier results established failure
of quantum ergodicity on star graphs~\cite{BerkolaikoKeatingWinn2004}, while
Berkolaiko and Winn~\cite{BerkolaikoWinn2018} constructed maximally scarred
nondegenerate states in star-graph settings.  In the opposite direction,
delocalization and full-support phenomena occur in other
regimes~\cite{IngremeauSabriWinn2020,PlumerTaufer2021}.

The distinction relevant to the present work is between \emph{existence or
classification of scar supports} and \emph{accessibility of a prescribed
regular scar by the exact spectrum of a fixed target metric}.  We address the
latter through regular arithmetic accessibility (RAC), formulated by the
intersection of the regular scar stratum with the orbit-closure torus of the
target metric.  This formulation also applies to resonant metrics for which the
phase orbit is confined to a proper subtorus.

\paragraph{Secular geometry and scar-preserving regularization}
Bond scattering and secular manifolds are standard tools in quantum-graph
spectral theory; see~\cite{BerkolaikoKuchment2013}.  Generic simplicity and
nonvanishing under edge-length perturbations were established by
Berkolaiko and Liu~\cite{BerkolaikoLiu2017}, with further generic eigenfunction
results in~\cite{Alon2024Generic}; related nodal and secular structures appear
in~\cite{AlonBandBerkolaiko2018,KurasovMuller2021}.  The regularization used in
this paper has a different constraint: a prescribed scar eigenpair is retained
exactly while only off-support lengths are perturbed to obtain a regular ambient
secular point.  This constrained step is what permits subsequent transfer back
to the original fixed target metric.

\paragraph{Quantitative observation and recurrence}
Spectral inequalities provide a complementary route to observability.
Egidi et al.~\cite{EgidiMugnoloSeelmann2024} prove
Logvinenko--Sereda-type estimates for Sturm--Liouville systems with applications
to quantum graphs and suitably distributed observation sets.  Our case is the
opposite geometric one: when GGCC fails, an entire primitive cycle or
exterior-to-exterior path may avoid the observation region, and the issue is how
rapidly exact eigenfunctions can concentrate on that uncontrolled subgraph.
For this purpose, we combine the local secular analysis with the quantitative
torus-filling estimates of Dumas and Fischler~\cite{DumasFischler2022}.  Rational
independence supplies qualitative density only; the polynomial scar and
observation rates require the stronger intrinsic Diophantine hypothesis used in
\Cref{sec:quantitative}.

\paragraph{Positioning of the present work}
Relative to the closest literature, the present contribution has three
distinct components.  First, rather than revisiting the classification of
minimal semiclassical supports, we study the fixed-metric accessibility of a
prescribed regular primitive scar.  Second, the argument goes beyond generic
spectral simplicity by constructing a scar-preserving regularization and then
using arithmetic recurrence to return to the original target metric.  Third,
the spectral construction admits a quantitative refinement: under an intrinsic
Diophantine condition, it yields polynomial off-support leakage and a
corresponding degeneration of finite-frequency observation.  For rationally
independent metrics, the same accessibility mechanism applies to every
primitive obstruction arising from failure of GGCC and leads to the generic
necessity result for Schr\"odinger control.

\section{Metric-graph, semiclassical, and arithmetic preliminaries}
\label{sec:prelim}

Let \(\G=(\V,\E,\ell)\) be a finite connected compact metric graph, with every
edge \(e\in\E\) identified with an interval \([0,\ell_e]\), where
\(\ell=(\ell_e)_{e\in\E}\in(0,\infty)^\E\).  A vertex is called
\emph{exterior} if it has degree one and \emph{interior}, otherwise.  Loops count
twice toward vertex degree.  Loops and parallel edges are allowed; by convention,
a loop is a one-edge simple cycle and two parallel edges may form a two-edge
simple cycle.  The control region \(\omega\) is open in the disjoint union of
edge interiors.  The state space is defined by
\begin{equation}
\label{eq:graph-L2-space}
L^2(\G)=\bigoplus_{e\in\E}L^2(0,\ell_e).
\end{equation}
We use the decomposition in \eqref{eq:graph-L2-space} throughout.
At every interior vertex \(v\), functions in the operator domain satisfy
\begin{equation}
\label{eq:kirchhoff-conditions}
u_e(v)=u_{e'}(v)
\quad\text{for all }e,e'\sim v,
\qquad
\sum_{e\sim v}\partial_\nu u_e(v)=0.
\end{equation}
At every exterior vertex, we fix once and for all, either the Dirichlet condition
\(u(v)=0\) or the Neumann condition \(\partial_\nu u(v)=0\).  Together with \eqref{eq:kirchhoff-conditions}, these
conditions define the self-adjoint Laplacian \(-\Delta_{\G,\ell}\).

A \emph{semiclassical measure} for the fixed metric \(\ell\) is a weak limit of
probability measures \(|u_n|^2\dd x\), where
\(-\Delta_{\G,\ell}u_n=k_n^2u_n\), \(\|u_n\|_{L^2(\G)}=1\), and
\(k_n\to\infty\).  A support is called \emph{minimal} if it is minimal under
set inclusion among supports of such probability semiclassical measures.

Set \(\T=\R/(2\pi\Z)\).  The phase-orbit closure and resonance lattice of a
positive target metric are
\begin{equation}
\label{eq:orbit-closure}
H_\ell:=\overline{\{[k\ell]:k\in\R\}}\subset\T^\E
\end{equation}
and
\begin{equation}
\label{eq:resonance-lattice}
\Lambda_\ell:=\{m\in\Z^\E:m\cdot\ell=0\}.
\end{equation}
The orbit closure in \eqref{eq:orbit-closure} and the resonance lattice in \eqref{eq:resonance-lattice} are related by the standard Kronecker description:
\begin{lemma}[Orbit-closure lattice criterion]
\label{lem:orbit-closure-lattice}
For every positive \(\ell\),
\begin{equation}
\label{eq:orbit-closure-lattice}
H_\ell
=
\left\{\theta\in\T^\E:
 m\cdot\theta\in2\pi\Z
 \text{ for every }m\in\Lambda_\ell
\right\}.
\end{equation}
\end{lemma}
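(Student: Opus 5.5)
We prove the two inclusions separately, writing \(A_\ell\) for the right-hand side of \eqref{eq:orbit-closure-lattice}. Since \(A_\ell\) is defined by the conditions \(m\cdot\theta\in2\pi\Z\), it is a closed subgroup of \(\T^\E\). The map \(\theta\mapsto m\cdot\theta\bmod 2\pi\) is continuous, so the condition is closed, and \(A_\ell\) is an intersection of kernels of characters \(\chi_m(\theta)=e^{i m\cdot\theta}\), \(m\in\Lambda_\ell\). The point \([k\ell]\) lies in \(A_\ell\) for every \(k\in\R\), because \(m\cdot k\ell=k(m\cdot\ell)=0\) whenever \(m\in\Lambda_\ell\). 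Hence the whole orbit lies in \(A_\ell\), and by closedness \(H_\ell\subset A_\ell\).

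For the reverse inclusion we use Pontryagin duality on the torus. First, \(H_\ell\) is a closed subgroup of \(\T^\E\): the orbit \(\{[k\ell]\}\) is a subgroup (the image of the homomorphism \(\R\to\T^\E\)), and the closure of a subgroup is a closed subgroup. The dual of \(\T^\E\) is \(\Z^\E\) via the characters \(\chi_m\). The standard fact we invoke is that every closed subgroup \(H\) equals the common kernel of its annihilator,
\[
H^\perp=\{m\in\Z^\E:\chi_m|_H\equiv1\},
\]
i.e.\ \(H=\bigcap_{m\in H^\perp}\ker\chi_m\). This holds because characters separate points of the compact abelian quotient \(\T^\E/H\).

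It therefore suffices to identify \(H_\ell^\perp=\Lambda_\ell\). A character \(\chi_m\) is trivial on \(H_\ell\) if and only if it is trivial on the dense orbit, i.e.\ \(e^{ik\,m\cdot\ell}=1\) for all \(k\in\R\). Taking \(k\) small and nonzero, this forces \(m\cdot\ell=0\), that is \(m\in\Lambda_\ell\). Conversely, \(m\in\Lambda_\ell\) clearly gives a trivial character on the orbit. Then
\[
H_\ell=\bigcap_{m\in\Lambda_\ell}\ker\chi_m=A_\ell .
\]

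The only nontrivial ingredient is the duality statement that a closed subgroup is determined by its annihilator. If we want a self-contained route, we can argue via Kronecker instead. Choose a \(\Z\)-basis adapted to the saturated lattice \(\Lambda_\ell\): since \(\Lambda_\ell=\Z^\E\cap\ell^\perp\) is saturated, it extends to a basis of \(\Z^\E\). This gives a unimodular change of coordinates on \(\T^\E\) in which \(A_\ell\) becomes \(\{0\}\times\T^{r}\). In those coordinates the remaining components of \(\ell\) are rationally independent, and the multidimensional Kronecker theorem gives density of the continuous flow in \(\T^r\). Checking that the saturation holds (it does, since \(\Lambda_\ell\) is the intersection of \(\Z^\E\) with a rational-free subspace kernel) is the only point requiring care; we expect it to be the main technical step.
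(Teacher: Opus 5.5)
Your proposal is correct and follows essentially the same argument as the paper: identify the annihilator of the orbit \(\{[k\ell]\}\) with \(\Lambda_\ell\), then use that a closed subgroup of \(\T^\E\) is the common kernel of its annihilating characters. Your explicit proof of the easy inclusion, your character-separation justification, and your alternative Kronecker route via a \(\Z\)-basis adapted to the saturated lattice \(\Lambda_\ell\) are all valid, but they go beyond what the paper's two-line proof uses.
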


\begin{proof}
The annihilator in \(\Z^\E\) of the one-parameter subgroup
\(\{[k\ell]:k\in\R\}\) is precisely \(\Lambda_\ell\).  A closed subgroup of a
torus is the common zero set of its annihilator characters.  This gives
\eqref{eq:orbit-closure-lattice}.
\end{proof}

\begin{definition}[Rationally independent metric]
\label{def:irrational}
A positive length vector \(\ell\in(0,\infty)^\E\) is rationally independent if
\(m\cdot\ell=0\), \(m\in\Z^\E\), implies \(m=0\).
\end{definition}

By \eqref{eq:resonance-lattice}--\eqref{eq:orbit-closure-lattice}, rational
independence is equivalent to \(H_\ell=\T^\E\).

\begin{remark}
\label{rem:no-quotient}
Degree-two Kirchhoff vertices may be suppressed for local ODE descriptions of
model states.  No quotient graph is used in the target-metric recurrence
argument.  The phase variables, \(H_\ell\), \(\Lambda_\ell\), and all secular
dynamics are formulated on the original edge set \(\E\).
\end{remark}

\section{Primitive supports and boundary-compatible model scars}
\label{sec:model-scars}

We begin with two structural properties of semiclassical measures that hold for
every fixed positive metric and every fixed assignment of Dirichlet or Neumann
conditions at the exterior vertices, as illustrated in
\cref{fig:primitive-supports}.  These properties constrain the possible
supports independently of the arithmetic accessibility mechanism developed
later.

\begin{figure}[t]
\centering
\begin{tikzpicture}[
    scale=0.86,
    transform shape,
    x=1cm,
    y=1cm,
    line cap=round,
    line join=round,
    every node/.style={font=\small},
    interior/.style={circle, fill=black, inner sep=2.2pt},
    exterior/.style={circle, draw=black, fill=white, line width=0.7pt,
                     inner sep=3.0pt}
]

\node[anchor=west] at (-0.2,3.4)
    {\textnormal{(a) Simple cycle (one-edge loop)}};

\coordinate (vloop) at (1.6,0.7);

\draw[line width=0.8pt]
    (vloop)
    .. controls (0.0,0.9) and (0.0,3.0) ..
    (1.6,3.0)
    .. controls (3.2,3.0) and (3.2,0.9) ..
    (vloop);

\node[interior] at (vloop) {};
\node[below=2pt of vloop] {$v$};

\node at (1.6,2.85) {$e$};
\node at (1.6,1.85) {$\ell_e$};

\node[align=left, anchor=north west, text width=4.3cm]
    at (-0.1,0.05)
    {A one-edge simple cycle.\\
     The loop \(e\) contributes two incidences at the vertex \(v\).};

\node[anchor=west] at (5.0,3.4)
    {\textnormal{(b) Simple exterior-to-exterior path}};

\coordinate (v0) at (5.6,2.1);
\coordinate (v1) at (7.4,2.1);
\coordinate (v2) at (9.2,2.1);
\coordinate (vm1) at (11.0,2.1);
\coordinate (vm) at (12.8,2.1);

\draw[line width=0.8pt] (v0) -- (v1);
\draw[line width=0.8pt] (v1) -- (v2);

\draw[line width=0.8pt, dotted]
    (v2) -- (vm1);

\draw[line width=0.8pt] (vm1) -- (vm);

\node[exterior] at (v0) {};
\node[interior] at (v1) {};
\node[interior] at (v2) {};
\node[exterior] at (vm) {};

\node[below=4pt of v0] {$v_0$};
\node[below=4pt of v1] {$v_1$};
\node[below=4pt of v2] {$v_2$};
\node[below=4pt of vm] {$v_m$};

\node[above=3pt] at ($(v0)!0.5!(v1)$) {$e_1$};
\node[above=3pt] at ($(v1)!0.5!(v2)$) {$e_2$};
\node[above=3pt] at ($(vm1)!0.5!(vm)$) {$e_m$};

\node[below=12pt] at ($(v0)!0.5!(v1)$) {$\ell_{e_1}$};
\node[below=12pt] at ($(v1)!0.5!(v2)$) {$\ell_{e_2}$};
\node[below=12pt] at ($(vm1)!0.5!(vm)$) {$\ell_{e_m}$};

\node[align=center, anchor=north]
    at ($(v0)+(0,-0.55)$)
    {\footnotesize exterior\\[-1pt]
     \footnotesize Dirichlet or Neumann};

\node[align=center, anchor=north]
    at ($(v1)+(0,-0.55)$)
    {\footnotesize interior};

\node[align=center, anchor=north]
    at ($(v2)+(0,-0.55)$)
    {\footnotesize interior};

\node[align=center, anchor=north]
    at ($(vm)+(0,-0.55)$)
    {\footnotesize exterior\\[-1pt]
     \footnotesize Dirichlet or Neumann};

\node[align=left, anchor=north west, text width=8.2cm]
    at (5.1,0.05)
    {A simple exterior-to-exterior path
     \(K=\{e_1,\ldots,e_m\}\).  Every interior vertex of the
     support has support degree two.};

\end{tikzpicture}

\caption{Primitive supports on a metric graph. Filled vertices denote
interior vertices carrying the Kirchhoff condition, whereas open vertices
denote exterior vertices with prescribed Dirichlet or Neumann conditions.
A primitive support is either a simple cycle, including a one-edge loop as
shown in panel~(a), or a simple exterior-to-exterior path as shown in
panel~(b).  If a probability semiclassical measure has support exactly equal
to such a primitive support \(K\), then
\(\mu=L_K^{-1}\,\dd x|_K\), where
\(L_K=\sum_{e\subset K}\ell_e\).}
\label{fig:primitive-supports}
\end{figure}

\subsection{Universal support laws}

\begin{lemma}[Edgewise flatness]
\label{lem:edgewise-flatness}
Let \(u_n\) be normalized exact eigenfunctions of \(-\Delta_{\G,\ell}\) with
frequencies \(k_n\to\infty\).  After passage to a subsequence, there are
constants \(\rho_e\ge0\) such that
\begin{equation}
\label{eq:edgewise-flat-limit}
|u_n|^2\dd x\rightharpoonup
\mu:=\sum_{e\in\E}\rho_e\,\dd x\big|_e.
\end{equation}
\end{lemma}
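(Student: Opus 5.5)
On each edge an eigenfunction solves $u''+k^2u=0$, so I will write it explicitly in the form
\[
u_{n,e}(x)=a_{n,e}e^{ik_nx}+b_{n,e}e^{-ik_nx},\qquad x\in(0,\ell_e),
\]
with $a_{n,e},b_{n,e}\in\C$, and compute $|u_n|^2$ directly. Expanding,
\[
|u_{n,e}|^2=|a_{n,e}|^2+|b_{n,e}|^2+2\,\mathrm{Re}\bigl(a_{n,e}\overline{b_{n,e}}\,e^{2ik_nx}\bigr).
\]

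The first step is to bound the coefficients uniformly in $n$. Integrating the expansion over the edge gives
\[
\|u_{n,e}\|^2_{L^2(0,\ell_e)}=\ell_e\bigl(|a_{n,e}|^2+|b_{n,e}|^2\bigr)+O\bigl(k_n^{-1}|a_{n,e}||b_{n,e}|\bigr),
\]
since the oscillatory term integrates to something of size at most $|a_{n,e}b_{n,e}|/k_n$. For $k_n$ large the error is absorbed, say $k_n\ge 2/\ell_e$. Because $\|u_n\|=1$, this yields
\[
|a_{n,e}|^2+|b_{n,e}|^2\le 2/\ell_e
\]
for all large $n$ and every $e\in\E$. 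The vertex conditions play no role here; only the ODE on each edge and normalization are used.

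The second step extracts limits. Since $\E$ is finite and the coefficients are bounded, I pass to a subsequence along which $|a_{n,e}|^2+|b_{n,e}|^2\to\rho_e\ge0$ for every edge. Test against $\varphi\in C(\overline{e})$, or more generally a continuous function on $\G$ restricted edgewise. The cross term contributes
\[
2\,\mathrm{Re}\Bigl(a_{n,e}\overline{b_{n,e}}\int_0^{\ell_e}\varphi(x)e^{2ik_nx}\dd x\Bigr).
\]
This tends to $0$ by the Riemann--Lebesgue lemma, because $k_n\to\infty$ and the coefficient is bounded. (Alternatively, approximate $\varphi$ uniformly by $C^1$ functions and integrate by parts to get $O(k_n^{-1})$.)

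Hence $\int\varphi|u_n|^2\dd x\to\sum_e\rho_e\int_e\varphi\dd x$, which is \eqref{eq:edgewise-flat-limit}. Taking $\varphi\equiv1$ also shows $\sum_e\rho_e\ell_e=1$, so the limit is a probability measure and no mass escapes to vertices. Weak convergence on the compact space $\G$ needs only continuous test functions, and these restrict to continuous functions on each closed edge, so the edgewise computation suffices.

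I do not expect a serious obstacle. The only point needing care is the uniform coefficient bound, which is exactly where normalization enters together with the absorption of the $O(k_n^{-1})$ cross term. The degenerate case $k_n$ near $0$ is excluded since $k_n\to\infty$.
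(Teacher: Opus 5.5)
Your proposal is correct and follows the paper's proof essentially step for step: a traveling-wave decomposition on each edge, a uniform amplitude bound from exact integration plus normalization, extraction of convergent two-way intensities \(|a_{n,e}|^2+|b_{n,e}|^2\to\rho_e\) along a subsequence, and the Riemann--Lebesgue lemma for the cross term. Your remainder bound \(O(k_n^{-1}|a_{n,e}||b_{n,e}|)\) is equivalent to the paper's \(k_n^{-1}(|A_{n,e}|^2+|B_{n,e}|^2)\), and your extra observation that \(\sum_e\rho_e\ell_e=1\) is a correct and harmless addition.
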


\begin{proof}
On each oriented edge, write
\(u_{n,e}(x)=A_{n,e}e^{ik_nx}+B_{n,e}e^{-ik_nx}\).  Exact integration gives
\[
\int_0^{\ell_e}|u_{n,e}|^2\dd x
=\ell_e\bigl(|A_{n,e}|^2+|B_{n,e}|^2\bigr)+R_{n,e},
\qquad
|R_{n,e}|\le k_n^{-1}
\bigl(|A_{n,e}|^2+|B_{n,e}|^2\bigr).
\]
For large \(n\), positivity of \(\ell_e-k_n^{-1}\) and the global
normalization bound all traveling-wave amplitudes uniformly on the finite edge
set.  Passing to a diagonal subsequence, the two-way intensities converge:
\(|A_{n,e}|^2+|B_{n,e}|^2\to\rho_e\).  For a continuous test function, the
cross term oscillates at frequency \(2k_n\) and tends to zero by the
Riemann--Lebesgue lemma.  Summing over the finite edge set proves
\eqref{eq:edgewise-flat-limit}.
\end{proof}

\begin{lemma}[Interior support law and two-edge balance]
\label{lem:support-law}
Let \(\mu\) be as in \eqref{eq:edgewise-flat-limit} and put
\(H_\mu=\{e\in\E:\rho_e>0\}\).  No interior vertex is a leaf of the subgraph
\(H_\mu\), where incidences are counted with multiplicity and a loop contributes
two incidences.  If exactly two distinct active edge incidences, belonging to edges
\(e,f\in H_\mu\), meet an interior vertex and no other active incidence is
present, then
\begin{equation}
\label{eq:two-edge-balance}
\rho_e=\rho_f.
\end{equation}
\end{lemma}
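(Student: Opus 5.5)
We work with the traveling-wave representation from the proof of \cref{lem:edgewise-flatness}. Orient every edge incident to the interior vertex \(v\) so that \(v\) sits at \(x=0\); a loop then gives two oriented incidences, each with its own coordinate. On each incidence \(j\) at \(v\) write
\[
u_{n,j}(x)=A_{n,j}e^{ik_nx}+B_{n,j}e^{-ik_nx}.
\]
Along the subsequence, \(|A_{n,j}|^2+|B_{n,j}|^2\to\rho_{e(j)}\). After a further subsequence we may also assume that the amplitudes themselves converge, \(A_{n,j}\to a_j\) and \(B_{n,j}\to b_j\), which is possible because they are uniformly bounded. Then \(|a_j|^2+|b_j|^2=\rho_{e(j)}\).

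The Kirchhoff conditions \eqref{eq:kirchhoff-conditions} at \(v\) read \(A_{n,j}+B_{n,j}=c_n\) for all \(j\), and \(ik_n\sum_j(A_{n,j}-B_{n,j})=0\). Dividing the second by \(k_n\neq0\) and passing to the limit, we obtain for the limiting amplitudes
\[
a_j+b_j=c\quad\text{for all }j,\qquad \sum_j(a_j-b_j)=0.
\]
This is the vertex scattering relation: the outgoing vector equals \(\sigma_v\) applied to the incoming one, with \(\sigma_v=\tfrac{2}{d}J-I\) unitary, where \(J\) is the all-ones matrix and \(d=\deg v\). Which of \(a_j,b_j\) counts as incoming depends on the orientation convention, but unitarity is all we need. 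It gives \(\sum_j|a_j|^2=\sum_j|b_j|^2\).

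\emph{Leaf exclusion.} Suppose exactly one incidence \(j_0\) at \(v\) is active, so \(a_j=b_j=0\) for \(j\neq j_0\). If there is any inactive incidence, the continuity relation forces \(c=0\). Then \(a_{j_0}=-b_{j_0}\), and the current relation gives \(a_{j_0}-b_{j_0}=0\). Hence \(a_{j_0}=b_{j_0}=0\), contradicting \(\rho_{e(j_0)}>0\).

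There is always an inactive incidence in this situation. The vertex is interior, so \(d\ge2\), and the active set is a single incidence; a loop counts twice, so a loop at \(v\) cannot be a lone active incidence. Hence no interior vertex is a leaf of \(H_\mu\).

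\emph{Two-edge balance.} Suppose exactly two incidences \(j_1,j_2\) are active. If \(d\ge3\), some inactive incidence again forces \(c=0\), so \(b_{j}=-a_{j}\) for \(j=j_1,j_2\). The current relation then gives \(2a_{j_1}+2a_{j_2}=0\), so \(|a_{j_1}|=|a_{j_2}|\), and therefore
\[
\rho_e=2|a_{j_1}|^2=2|a_{j_2}|^2=\rho_f.
\]
If \(d=2\), the relations give \(a_{j_1}+b_{j_1}=a_{j_2}+b_{j_2}\) and \(a_{j_1}-b_{j_1}=-(a_{j_2}-b_{j_2})\). Adding and subtracting yields \(a_{j_1}=b_{j_2}\) and \(b_{j_1}=a_{j_2}\). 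Hence
\[
|a_{j_1}|^2+|b_{j_1}|^2=|a_{j_2}|^2+|b_{j_2}|^2,
\]
which is \eqref{eq:two-edge-balance}.

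The statement assumes \(e\) and \(f\) are distinct edges. If a single loop supplied both incidences, the conclusion would be trivial anyway.

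The main point requiring care is the normalization. The flux relation carries a factor \(k_n\), so one must divide before taking limits, and one must check that the limiting amplitudes really represent \(\rho_e\). The latter holds because the same bounded amplitudes appear at both ends of an edge up to the unimodular phases \(e^{\pm ik_n\ell_e}\), so their moduli are unaffected by the choice of endpoint.
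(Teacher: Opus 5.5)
Your proof is correct and takes essentially the paper's route. Both arguments pass to a subsequence along which the vertex amplitudes converge, note that inactive incidences carry zero limiting amplitudes, and then read off leaf exclusion and the two-edge balance from the limiting Kirchhoff relations: the active amplitudes satisfy \(b=-a\) when \(d\ge3\), and the two incidences swap amplitudes when \(d=2\). The only difference is presentational: you use the continuity/current form of the vertex conditions rather than the scattering matrix \(S_v=\tfrac{2}{d}J_d-I_d\). This has the small advantage of covering, in the same step, the case where the lone active bond has zero incoming but nonzero outgoing limit.
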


\begin{proof}
Fix an interior vertex \(v\) of degree \(d\).  In incoming/outgoing bond
coordinates, Kirchhoff matching is
\(a^{\rm out}=S_va^{\rm in}\), where
\(S_v=2J_d/d-I_d\).  After passing, if necessary, to a further subsequence, all finitely many incoming
and outgoing bond amplitudes at the vertices converge.  This does not change the
limits in \eqref{eq:edgewise-flat-limit}.  An inactive edge has two-way intensity
tending to zero and hence both its incoming and outgoing amplitudes tend to zero.

If the active support had degree one at \(v\), exactly one incident bond
coordinate would remain active.  A nonzero one-coordinate incoming limit would be
mapped by \(S_v\) to a vector with nonzero coordinates on the other incident
bonds, a contradiction.  Hence, an interior leaf is impossible.  This formulation
also covers loops correctly because a loop contributes two incidences at its vertex.

Suppose exactly two distinct active incidences, belonging to edges \(e,f\), are
present.  If \(d>2\), the vanishing of all
inactive outgoing limits forces the sum of the two active incoming limits to be
zero; their moduli are therefore equal.  If \(d=2\), the Kirchhoff scattering
matrix swaps the two incoming coordinates.  In either case, the two-way edge
intensities agree, which gives \eqref{eq:two-edge-balance}.
\end{proof}

\begin{corollary}[Uniqueness on a primitive support]
\label{cor:primitive-measure-unique}
If a probability semiclassical measure has support exactly equal to a simple
cycle or a simple exterior-to-exterior path \(K\), then
\begin{equation}
\label{eq:primitive-arclength-measure}
\mu=\frac1{L_K}\,\dd x\big|_K,
\qquad
L_K=\sum_{e\subset K}\ell_e.
\end{equation}
\end{corollary}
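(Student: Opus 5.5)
By \cref{lem:edgewise-flatness}, the measure has the form \(\mu=\sum_{e}\rho_e\dd x|_e\). If its support is exactly \(K\), then \(\rho_e>0\) precisely for \(e\subset K\), so \(H_\mu=K\) as an edge set. The plan is to show that all \(\rho_e\) on \(K\) coincide, using the two-edge balance \eqref{eq:two-edge-balance} of \cref{lem:support-law} at every interior vertex met by \(K\). The normalization \(\mu(\G)=1\) then gives \(\sum_{e\subset K}\rho_e\ell_e=\rho L_K=1\), hence \(\rho=L_K^{-1}\), which is \eqref{eq:primitive-arclength-measure}.

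The key step is to check the hypothesis of the balance law at each interior vertex \(v\) of \(K\). Since \(K\) is a simple cycle or a simple path, every interior vertex of \(K\) carries exactly two active incidences, counted with multiplicity. In the case of a one-edge loop, the two incidences belong to the single edge \(e\), and the balance statement is vacuous there; the loop is then a single edge with a single constant \(\rho_e\), and nothing more is needed. For a cycle with at least two edges, or for a path, the two incidences at each interior vertex of \(K\) come from two distinct edges \(e,f\) of \(K\). All other edges at \(v\) are outside \(K\), so they have \(\rho=0\) and are inactive. Hence \(\rho_e=\rho_f\) by \eqref{eq:two-edge-balance}. Two parallel edges forming a two-edge cycle fall under this case as well, since each of the two vertices sees one incidence from each edge.

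To propagate equality along \(K\), order the edges \(e_1,\dots,e_m\) so that consecutive edges share an interior vertex. For a path, the shared vertices are \(v_1,\dots,v_{m-1}\), all interior, since only the endpoints \(v_0,v_m\) are exterior. For a cycle, the shared vertices are all vertices of the cycle. A cycle cannot contain an exterior vertex, because such a vertex has degree one. The endpoint conditions (Dirichlet or Neumann) are never used, so all four types \(DD,DN,ND,NN\) are covered.

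The only point requiring care, and hence the main obstacle, is the incidence bookkeeping. One must confirm that "exactly two distinct active incidences" holds at every interior vertex of \(K\), including when the ambient vertex has large degree or when \(K\) passes through a vertex that also carries a loop not in \(K\). That loop is inactive, so its incidences do not count. Once this is settled, the argument is a finite chain of equalities.
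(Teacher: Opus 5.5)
Your proposal is correct and follows essentially the same route as the paper: \cref{lem:edgewise-flatness} gives constant edge densities, the two-edge balance of \cref{lem:support-law} at each interior vertex of \(K\) (with the one-edge loop handled trivially) yields a common density on \(K\), and probability normalization fixes it as \(L_K^{-1}\). Your extra incidence bookkeeping (two-edge cycles of parallel edges, inactive loops at a vertex of \(K\), and the absence of exterior vertices on a cycle) makes explicit what the paper's short proof leaves implicit.
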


\begin{proof}
By \cref{lem:edgewise-flatness}, the measure has a constant density on each
active edge.  Every interior vertex of the primitive support has support-degree two, counting
incidences.  Whenever two distinct consecutive edges meet there,
\cref{lem:support-law} equates their densities; for a one-edge loop, there is
nothing to propagate.  Thus, one common density holds on all edges of \(K\).  Probability normalization determines that density as \(L_K^{-1}\),
which proves \eqref{eq:primitive-arclength-measure}.
\end{proof}

\subsection{Boundary-compatible model scars}

\subsubsection{Exterior-to-exterior path scars}

Let \(\gamma=P(a,b)\subset\G\) be a simple path joining two exterior
vertices \(a\) and \(b\).
The boundary conditions at \(a\) and \(b\) may independently be Dirichlet or Neumann.

Call a vertex \(v\in\gamma\setminus\{a,b\}\) a \emph{branching path vertex}
if at least one edge incident to \(v\) does not belong to \(\gamma\).
These are precisely the vertices at which a path-supported state must vanish,
because the state is zero on every off-path edge and continuity holds at \(v\).

\begin{proposition}[Boundary-compatible path model scar]
\label{prop:path-model-scar}
For every simple exterior-to-exterior path \(\gamma=P(a,b)\) and every fixed
endpoint boundary type \((B_a,B_b)\in\{D,N\}^2\), there exist positive auxiliary edge lengths \(\ell^\star\), a frequency
\(k_\star>0\), and a nonzero eigenfunction
\(\phi_\gamma^\star\) of \(-\Delta_{\G,\ell^\star}\) such that
\[
-\Delta_{\G,\ell^\star}\phi_\gamma^\star
=
k_\star^2\phi_\gamma^\star,
\qquad
\supp\phi_\gamma^\star=\gamma.
\]
\end{proposition}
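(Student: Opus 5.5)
The plan is to build \(\phi_\gamma^\star\) by hand. I would take a single sinusoid in the arclength variable along \(\gamma\), extend it by zero off \(\gamma\), and choose the path edge lengths so that every branching path vertex lands on a zero of that sinusoid. Fix \(k_\star=\pi\). Write \(\gamma=(e_1,\dots,e_m)\) with vertices \(a=v_0,v_1,\dots,v_m=b\), and let \(s\in[0,L]\) be arclength along \(\gamma\) from \(a\). Off \(\gamma\), give every edge an arbitrary positive length, say \(\ell^\star_e=1\), and set \(\phi_\gamma^\star\equiv0\) there. On \(\gamma\), put \(\phi(s)=\sin(\pi s)\) if \(B_a=D\) and \(\phi(s)=\cos(\pi s)\) if \(B_a=N\). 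Either choice satisfies the boundary condition at \(a\) and solves \(-\phi''=\pi^2\phi\) on every edge.

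Next I choose the lengths of the path edges.
\begin{itemize}
\item For the interior edges \(e_2,\dots,e_{m-1}\), take \(\ell^\star_{e_j}=1\).
\item For the first edge, take \(\ell^\star_{e_1}=1/2\) if \(B_a=N\) and \(\ell^\star_{e_1}=1\) if \(B_a=D\). Then \(v_1,\dots,v_{m-1}\) sit at zeros of \(\phi\): integers for \(\sin(\pi s)\), half-integers for \(\cos(\pi s)\).
\item For the last edge, take length \(1\) if \(B_b=D\), so that \(b\) is again a zero of \(\phi\). Take length \(1/2\) if \(B_b=N\), so that \(b\) is a critical point of \(\phi\) and \(\phi'(L)=0\).
\end{itemize}
The degenerate case \(m=1\) is handled directly by choosing the single length so that both endpoint conditions hold:
\begin{itemize}
\item \(DD\): \(\sin\) with length \(1\);
\item \(DN\): \(\sin\) with length \(1/2\);
\item \(ND\): \(\cos\) with length \(1/2\);
\item \(NN\): \(\cos\) with length \(1\).
\end{itemize}

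It then remains to check the vertex conditions.
\begin{itemize}
\item At an interior vertex of \(\gamma\) of degree two, both edges lie on \(\gamma\). Since \(\phi\) is a single smooth function of \(s\), continuity of the value and balance of the two outward derivatives (which are \(\phi'(s)\) and \(-\phi'(s)\)) hold automatically.
\item At a branching path vertex, the off-path edges carry zero. By construction \(\phi\) vanishes there, so continuity holds. In the derivative sum only the two path incidences contribute, namely \(\phi'(s_v)\) and \(-\phi'(s_v)\), so the sum is zero. Loops and parallel off-path edges attached at \(v\) contribute zero incidences, so they change nothing.
\item At vertices off \(\gamma\), both the Kirchhoff condition and the Dirichlet or Neumann conditions are satisfied trivially by zero.
\end{itemize}
Since \(\sin(\pi s)\) and \(\cos(\pi s)\) vanish only at isolated points, \(\phi\) is nonzero almost everywhere on \(\gamma\), so \(\supp\phi_\gamma^\star=\gamma\).

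The only point needing care is this bookkeeping. The parity of the endpoint type must be matched with placing every branching vertex at a zero. The \(m=1\) case and the mixed \(DN\) and \(ND\) cases must also be handled, and the half-length edges above take care of both. I expect no genuine analytic obstacle.
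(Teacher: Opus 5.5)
Your proof is correct and is essentially the paper's construction: extend by zero off \(\gamma\), put a single sinusoid along \(\gamma\) whose nodes sit at the branching vertices, and choose each terminal length as a whole or half period according to the Dirichlet or Neumann type at that endpoint. The differences are cosmetic: you normalize \(k_\star=\pi\), place a node at every interior path vertex using explicit lengths \(1\) or \(1/2\), and so only need the one-edge case \(m=1\) separately, where the paper instead separates the case with no branching vertices.
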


\begin{proof}
We may fix \(k_\star=1\), since a common rescaling of the auxiliary metric
only rescales the spectral parameter. Let \(v_1,\ldots,v_r\) be the branching path vertices in their path order.

We first treat the case \(r=0\). Since \(\G\) is connected and no vertex of
\(\gamma\) has an off-path attachment, \(\G\) is an interval up to the retained
degree-two subdivisions. Choose the total auxiliary length
\(L_\gamma^\star\) by
\[
L_\gamma^\star\in
\begin{cases}
\pi\mathbb N,&(B_a,B_b)=DD,\\
\pi(\mathbb N+\tfrac12),&(B_a,B_b)=DN\text{ or }ND,\\
\pi\mathbb N,&(B_a,B_b)=NN,
\end{cases}
\]
with a positive integer in the \(NN\) case. The standard sine/cosine mode on
that interval is then a nonzero eigenfunction at eigenvalue \(1\) satisfying
the prescribed separated endpoint conditions. Distribute
\(L_\gamma^\star\) arbitrarily into positive lengths on the constituent
original edges. Degree-two Kirchhoff vertices are exactly \(C^1\) subdivision
points, so the resulting function is an eigenfunction on the original graph.

Assume now that \(r\ge1\). On every off-path edge, define
\(\phi_\gamma^\star\equiv0\). Continuity, therefore, forces
\[
\phi_\gamma^\star(v_j)=0,
\qquad j=1,\ldots,r.
\]
Between consecutive branching vertices, choose the total auxiliary path length
to be an integer multiple of \(\pi\). In such a segment, the solution may be
written, in a local arclength coordinate \(s\), as
\[
\phi_\gamma^\star(s)=A_j\sin(s-s_j),
\]
where \(s_j\) is the coordinate of the left nodal endpoint. At a branching
vertex the off-path normal derivatives vanish, so the Kirchhoff condition
requires cancellation of the two active path derivatives. Consequently, the
amplitude on the next path segment is uniquely determined from the preceding
one, up to the sign forced by the chosen number of half-waves.

For a terminal segment joining a branching node to a Dirichlet exterior
endpoint, choose the total length in \(\pi\mathbb N\); for a Neumann exterior
endpoint, choose it in \(\pi(\mathbb N+\tfrac12)\).
Thus, all four endpoint combinations \(DD,DN,ND,NN\) are compatible. Any
original degree-two Kirchhoff vertex carrying no off-path attachment is treated
only as an internal point of the one-dimensional ODE, and each selected total
segment length is distributed arbitrarily into positive original edge lengths.

At every branching path vertex, the function vanishes on all incident edges,
so continuity holds; the two path derivatives cancel by construction and all
off-path derivatives are zero, and hence,the Kirchhoff condition holds. The
prescribed condition holds at \(a\) and \(b\), while on every active edge
\(-\phi''=\phi\). Therefore, \(\phi_\gamma^\star\) is an exact eigenfunction
at eigenvalue \(1=k_\star^2\), supported precisely on \(\gamma\).
\end{proof}

\begin{lemma}[Uniqueness of the path-supported eigendirection]
\label{lem:path-supported-unique}
For an auxiliary path scar as in \cref{prop:path-model-scar},
\begin{equation}
\label{eq:path-supported-eigenspace}
\mathcal E_\star^\gamma
:=
\left\{
f\in\ker(-\Delta_{\G,\ell^\star}-k_\star^2):
\supp f\subseteq\gamma
\right\}
=
\operatorname{span}\{\phi_\gamma^\star\}.
\end{equation}
\end{lemma}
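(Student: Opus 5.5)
Take any \(f\) in the eigenspace \(\mathcal E_\star^\gamma\) of \eqref{eq:path-supported-eigenspace}. We will show that \(f\) is determined by a single scalar, namely one amplitude on the first path segment. The tool is the transfer structure along \(\gamma\), which is an ODE argument: on each active edge, \(f\) solves \(-f''=k_\star^2 f\). At every degree-two Kirchhoff vertex of \(\gamma\) with no off-path attachment, continuity of \(f\) and \(f'\) glues the edges into a single interval ODE, exactly as in the proof of \cref{prop:path-model-scar}. So \(\gamma\) splits into the segments between consecutive points of \(\{a,v_1,\dots,v_r,b\}\), and on each segment \(f\) is one solution of the interval ODE.

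The first step is to record the vertex constraints. Since \(f\equiv0\) on the off-path edges, continuity at each branching vertex \(v_j\) forces \(f(v_j)=0\) from both path sides. The off-path derivatives vanish, so the Kirchhoff condition reduces to cancellation of the two path derivatives at \(v_j\). At the endpoints \(a,b\), the prescribed \(B_a,B_b\) conditions hold.

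The second step is to propagate along the path. On the first segment, from \(a\) to \(v_1\) (or to \(b\) if \(r=0\)), the endpoint condition at \(a\) fixes the solution up to one scalar: \(f=c\sin(k_\star s)\) for a Dirichlet endpoint, or \(f=c\cos(k_\star s)\) for a Neumann endpoint. On each later segment, the two facts \(f(v_j^+)=0\) and \(f'(v_j^+)=-f'(v_j^-)\) (derivatives outward) are Cauchy data for the second-order ODE. By uniqueness for the initial value problem, they determine \(f\) on that segment linearly from its values on the previous segment. Inductively, \(f\) on all of \(\gamma\) is \(c\) times a fixed function. The same recursion applied to \(\phi_\gamma^\star\) reproduces \(\phi_\gamma^\star\), with a nonzero constant \(c_\star\). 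Hence \(f=(c/c_\star)\phi_\gamma^\star\), so \(\mathcal E_\star^\gamma\subseteq\operatorname{span}\{\phi_\gamma^\star\}\). The reverse inclusion holds because \(\supp\phi_\gamma^\star=\gamma\).

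The main point needing care is that the propagation is well defined and never loses information. The Cauchy data at each \(v_j^+\) is \((0,\pm f'(v_j^-))\). We must rule out a hidden extra degree of freedom, such as a nonzero solution on some segment whose value and derivative both vanish at the left node. This is excluded by ODE uniqueness. Note that the argument never needs \(f'(v_j^-)\neq0\): if it vanished, \(f\) would vanish identically from that point on, which is still a multiple of \(\phi_\gamma^\star\) only if \(c=0\), and this is consistent. The propagation uses only linearity, so no case split is needed. Loops and repeated vertices cannot occur, since \(\gamma\) is a simple path, so each path vertex has exactly two path incidences. The case \(r=0\) is the single-interval Sturm--Liouville problem, whose eigenspaces are one-dimensional.
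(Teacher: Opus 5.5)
Your argument is correct and is essentially the paper's proof. The paper shows that the single free Cauchy datum at an exterior endpoint (the derivative at a Dirichlet end, the value at a Neumann end) determines $f$ injectively. It does so by propagating zero Cauchy data through each branching vertex, using $f(v_j)=0$ and cancellation of the two path derivatives; this is exactly your parametrization of $f$ by the first-segment amplitude $c$. Your aside about $f'(v_j^-)=0$ is slightly muddled: vanishing of both Cauchy data at $v_j^-$ simply forces $c=0$ by backward uniqueness. The linear propagation argument does not depend on that aside.
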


\begin{proof}
Let \(f\in\mathcal E_\star^\gamma\), with
\(\mathcal E_\star^\gamma\) as in \eqref{eq:path-supported-eigenspace}.
At every branching path vertex, continuity with the inactive off-path edges
forces \(f=0\). Fix an oriented terminal segment and let \(v_0\) denote its
exterior endpoint. The map
\[
T:\mathcal E_\star^\gamma\longrightarrow\C,
\qquad
T(f)=
\begin{cases}
\partial_\nu f(v_0),& B_{v_0}=D,\\
f(v_0),& B_{v_0}=N,
\end{cases}
\]
is injective. Indeed, if \(T(f)=0\), then the prescribed exterior boundary
condition supplies the complementary zero Cauchy datum at \(v_0\). Uniqueness
for \(-f''=k_\star^2f\) gives \(f\equiv0\) on the first segment. At the next
branching vertex, continuity and Kirchhoff matching then give zero Cauchy data
for the following segment. Propagating along the path yields \(f\equiv0\) on
all of \(\gamma\). Hence,
$
\dim\mathcal E_\star^\gamma\le1.
$
The space is nonzero because it contains \(\phi_\gamma^\star\), proving the claim.

If \(\gamma\) has no branching vertex, connectedness implies that the graph
itself is an interval after degree-two subdivisions. The separated \(DD\),
\(DN\), or \(NN\) Sturm--Liouville spectrum is simple, and the same conclusion
follows directly.
\end{proof}

\subsubsection{Cycle scars}

\begin{proposition}[Cycle model scar]
\label{prop:cycle-model-scar}
Let \(C\subset\G\) be a simple cycle.
If \(C\neq\G\), there exist positive auxiliary edge lengths, a frequency
\(k_\star>0\), and a nonzero eigenfunction \(\phi_C^\star\) such that
\[
-\Delta_{\G,\ell^\star}\phi_C^\star
=
k_\star^2\phi_C^\star,
\qquad
\supp\phi_C^\star=C.
\]
Moreover, the eigendirections at \(k_\star^2\) supported entirely on \(C\)
form a one-dimensional space.
If \(C=\G\), the cycle already exhausts the graph and no off-support
regularization is required for the observability obstruction.
\end{proposition}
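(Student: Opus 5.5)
The plan mirrors the proof of \cref{prop:path-model-scar} and \cref{lem:path-supported-unique}. Set \(\phi_C^\star\equiv0\) on every off-cycle edge. Call a vertex \(v\in C\) an \emph{attachment vertex} if some edge incident to \(v\) is not in \(C\). Because \(\G\) is connected and \(C\neq\G\), there is at least one attachment vertex. (If every off-cycle edge were attached elsewhere, connectedness would still force some edge to meet \(C\).) Let \(v_1,\dots,v_r\), \(r\ge1\), be the attachment vertices in cyclic order. Continuity with the vanishing off-cycle edges then forces \(\phi_C^\star(v_j)=0\) for every \(j\).

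Next we choose the lengths. Take \(k_\star=1\). On each arc of \(C\) between consecutive attachment vertices, choose the total auxiliary length in \(\pi\mathbb N\) and put \(\phi_C^\star=A_j\sin(s-s_j)\) there, in arclength \(s\) measured from \(v_j\). Spread each arc length arbitrarily over its constituent original edges. Degree-two Kirchhoff vertices with no attachment are \(C^1\) points of the ODE, so they impose nothing. Off-cycle edges receive arbitrary positive lengths; this is harmless because the function vanishes there. At \(v_j\) the off-cycle normal derivatives vanish, so Kirchhoff reduces to cancellation of the two cycle derivatives. With arc length \(n_j\pi\), the outgoing derivative of arc \(j\) at its far end is \(-A_j(-1)^{n_j}\). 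Cancellation therefore fixes \(A_{j+1}\) as \(\pm A_j\).

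Going once around the cycle returns a consistency condition \(A_1=\pm A_1\). A sign mismatch would kill the construction, so I would choose the parities of the \(n_j\) to make the product of signs \(+1\). For instance, taking every \(n_j\) even makes each cancellation read \(A_{j+1}=A_j\). A one-edge loop at a single vertex \(v_1\) needs separate bookkeeping, since its two incidences at \(v_1\) have opposite orientation. I expect this orientation and sign bookkeeping, together with the loop case, to be the main point needing care.

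For uniqueness, let \(f\) be any eigenfunction at \(k_\star^2=1\) supported in \(C\). Then \(f(v_j)=0\) for every \(j\). The map \(f\mapsto\partial_s f\) at \(v_1\) (along the outgoing arc) is injective: if it vanishes, \(f\) has zero Cauchy data there, so \(f\equiv0\) on arc 1. Kirchhoff cancellation at \(v_2\) then gives zero Cauchy data on arc 2, and propagating around the cycle yields \(f\equiv0\). Hence the \(C\)-supported eigenspace has dimension at most one, and it contains \(\phi_C^\star\), so it is one-dimensional. Finally, \(\supp\phi_C^\star=C\), since \(A_j\neq0\) and \(\sin\) vanishes only at isolated points. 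The case \(C=\G\) is just the remark in the statement: the cycle is the whole graph, and nothing needs to be constructed.
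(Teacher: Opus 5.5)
Your proposal is correct and follows the paper's proof essentially step for step: you extend by zero off $C$, use the forced vanishing at the attachment vertices, take arcs of length $n_j\pi$ with the recursion $A_{j+1}=(-1)^{n_j}A_j$ closed by a parity condition, and prove uniqueness by cutting at $v_1$ and propagating zero Cauchy data around the cycle; your choice of every $n_j$ even is a special case of the paper's requirement that $\sum_j n_j$ be even. The one-edge loop you flag needs no separate bookkeeping, since it is just the case $r=1$, where the closing condition reads $A_1=(-1)^{n_1}A_1$, and your even choice already satisfies it.
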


\begin{proof}
Assume first that \(C\neq\G\).  Since \(\G\) is connected, \(C\) contains at
least one vertex incident to an edge outside \(C\).  Subdividing only for the
local ODE description, if necessary, we regard every such attachment point as
a vertex of \(C\).  Let \(v_1,\ldots,v_m\) denote the attachment vertices in
cyclic order.  We prescribe the cycle state to vanish at these vertices.
Choose the auxiliary lengths of the successive cycle arcs so that
$$
    k_\star L_j=n_j\pi,
    \qquad n_j\in\mathbb N,
$$
and choose the integers \(n_j\) so that \(\sum_{j=1}^m n_j\) is even.  On the
\(j\)-th arc take a sine solution vanishing at its two endpoints.  If \(A_j\)
is its amplitude, the Kirchhoff condition at the next attachment vertex gives
$$
    A_{j+1}=(-1)^{n_j}A_j.
$$
The parity condition on \(\sum_j n_j\) makes this recursion compatible after
one circuit of the cycle.  Taking \(A_1\neq0\), therefore, produces a nonzero
cycle state satisfying the Kirchhoff derivative condition at every attachment
vertex.  Set the function identically equal to zero on \(\G\setminus C\).
The off-cycle traces and derivatives then vanish, so continuity and the
Kirchhoff conditions hold at every attachment vertex.  The resulting function
\(\phi_C^\star\) is consequently an exact eigenfunction at \(k_\star^2\)
supported on \(C\).

It remains to determine the dimension of the eigenspace at \(k_\star^2\)
supported entirely on \(C\).  Let \(\psi\) be any such eigenfunction.  At each
attachment vertex \(v_j\), an incident off-cycle edge carries the identically
zero restriction of \(\psi\).  Continuity therefore forces
\(\psi(v_j)=0\).  Fix one attachment vertex \(v_1\) and cut the cycle there.
Since the value at the cut is zero, a cycle-supported solution is determined
by one outgoing derivative at \(v_1\).  The edge equation, together with
continuity and the Kirchhoff condition at successive cycle vertices,
determines the solution uniquely around the entire cycle.  Hence, the space of
cycle-supported eigendirections at \(k_\star^2\) has dimension at most one.
Since the construction above provides a nonzero such eigendirection, this
space is exactly one-dimensional.

The same argument covers a one-edge loop and a two-edge cycle formed by
parallel edges.  Cutting at an attachment vertex reduces the active support to
an interval, with the endpoint conditions encoding the continuity and
Kirchhoff conditions at the original cut vertex.
If \(C=\G\), every eigenfunction is supported on the whole graph, so no
off-support concentration argument is required.  This includes the pure-loop
and parallel-edge cycle cases.  They will be treated separately whenever
simplicity regularization is invoked below.
\end{proof}

\section{Scar-preserving regularization}
\label{sec:regularization}

The model scar need not initially correspond to a simple ambient eigenvalue.
We now show that multiplicity can be removed by varying only off-support
edge lengths while leaving the scar itself unchanged.

\begin{proposition}[Scar-preserving simplicity regularization]
\label{prop:simplicity}
Let \(K\subsetneq\G\) be either a path scar from
\cref{prop:path-model-scar} or a cycle scar from
\cref{prop:cycle-model-scar}. Assume that the space of
\(k_\star^2\)-eigenfunctions supported entirely on \(K\) is one-dimensional,
spanned by \(\phi_K^\star\). Then, the auxiliary off-support lengths may be
perturbed arbitrarily slightly so that \(k_\star^2\) remains an eigenvalue with
eigenfunction \(\phi_K^\star\) and becomes simple.
\end{proposition}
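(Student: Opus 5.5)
The approach is to decompose the \(k_\star^2\)-eigenspace into the scar direction and a complement, and then to kill the complement with a generic perturbation of off-support lengths. Since \(\phi_K^\star\) vanishes identically off \(K\) and vanishes at every attachment vertex of \(K\) (continuity with the zero off-support restrictions), changing the lengths of edges in \(\E\setminus K\) leaves \(\phi_K^\star\) an exact eigenfunction at \(k_\star^2\). The Kirchhoff conditions at attachment vertices only involve the zero off-support traces and derivatives, and no off-support edge enters the ODE on \(K\). So the family \(\ell^\star(t)=(\ell^\star_K,\ell^\star_{\E\setminus K}+t)\), with \(t\in(-\varepsilon,\varepsilon)^{\E\setminus K}\), keeps \(k_\star^2\in\spec(-\Delta_{\G,\ell^\star(t)})\) with eigenfunction \(\phi_K^\star\). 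Write the eigenspace as \(E(t)=\operatorname{span}\{\phi_K^\star\}\oplus E'(t)\), where \(E'(t)\) is the orthogonal complement inside \(E(t)\). It then suffices to show that \(E'(t)=0\) for arbitrarily small \(t\).

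The key step is to compute how eigenvalues split under an off-support length change, via the Hadamard-type formula for quantum graphs. For a smooth analytic eigenbranch \(\lambda(t)\) with normalized eigenfunction \(f\), the derivative in \(\ell_e\) equals \(-\bigl(|f_e'|^2+\lambda|f_e|^2\bigr)\) evaluated on \(e\), up to a sign convention; this quantity is constant along the edge for the free Laplacian. By analytic perturbation theory (Kato–Rellich) in the edge length, after the standard unitary rescaling to a fixed domain, the eigenvalue cluster near \(k_\star^2\) splits into analytic branches. The branch through \(\phi_K^\star\) is exactly constant, because \(\phi_K^\star\) has zero energy on off-support edges. Any other eigenfunction \(g\in E'(0)\) must be nonzero somewhere off \(K\): otherwise \(\supp g\subseteq K\), and the one-dimensionality hypothesis (from \cref{lem:path-supported-unique} or \cref{prop:cycle-model-scar}) forces \(g\in\operatorname{span}\{\phi_K^\star\}\), contradicting orthogonality. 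Hence \(g\) has positive energy density \(|g_e'|^2+k_\star^2|g_e|^2>0\) on some edge \(e\notin K\), since a nonzero solution of \(-g''=k_\star^2 g\) on an edge cannot have both \(g\) and \(g'\) vanish. The restricted quadratic form \(Q_e(g)=|g_e'|^2+k_\star^2|g_e|^2\) on \(E'(0)\) is therefore not identically zero for some \(e\), and its first-order splitting matrix is negative semidefinite and nonzero. Consequently at least one branch leaves \(k_\star^2\) at first order, which strictly lowers the multiplicity of \(k_\star^2\) for generic small \(t\) in direction \(e\).

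The multiplicity reduction is then iterated. For small generic \(t\), the branches starting in \(E'(0)\) that have nonzero derivative leave \(k_\star^2\). At the new parameter, the remaining eigenspace at \(k_\star^2\) still contains \(\phi_K^\star\) and still satisfies the one-dimensionality hypothesis, because the lengths on \(K\) and hence the \(K\)-supported analysis are unchanged. So the same argument applies again, and after at most \(\dim E(0)-1\) steps of arbitrarily small size \(k_\star^2\) becomes simple. The main obstacle is that a degenerate branch could stay at \(k_\star^2\) for all \(t\) in a neighborhood, i.e.\ a persistent eigenfunction with \(Q_e\equiv0\) along the branch for every off-support \(e\). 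To rule this out, I would apply the same support argument along the branch. Analyticity gives that if a branch stays at \(k_\star^2\) with derivative zero in every off-support direction, its eigenfunctions have zero energy on every off-support edge, hence are supported in \(K\) and are multiples of \(\phi_K^\star\). This contradicts their being in \(E'\). Care is needed in the degenerate case where the first-order matrix vanishes on part of \(E'\) while the branch still moves at higher order. I would handle it by choosing eigenbranches via Kato's analytic selection along a single line \(t=s\,\mathbf 1_e\) and arguing with the branch-wise derivative identity, rather than with the matrix at \(t=0\).
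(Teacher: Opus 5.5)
Your argument is correct, but it is organized differently from the paper's.

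\textbf{Your route.} You perturb one off-support edge at a time. Each step uses only that every $g\in E'$ carries positive energy density $|g_e'|^2+k_\star^2|g_e|^2$ on \emph{some} edge $e\not\subset K$. The first-order form $-(f_e'\overline{g_e'}+k_\star^2f_e\overline{g_e})$ in the single direction $\ell_e$ is then nonzero and negative semidefinite on $E'$, so at least one branch leaves $k_\star^2$. You then iterate, correctly using that the space of $K$-supported eigenfunctions depends only on the lengths on $K$, so it stays one-dimensional at each new base point.

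\textbf{The paper's route.} It deforms all off-support lengths at once, $\ell_e(t)=(1-t)\ell_e^\star$ for $e\not\subset K$. The compressed first-order form is then the sum $Q_{\mathrm{off}}(f,g)=\sum_{e\not\subset K}\int_e(\partial_xf\,\overline{\partial_xg}+k_\star^2f\overline{g})\dd x$. Your own observation, summed over all off-support edges, says exactly that $Q_{\mathrm{off}}$ is positive semidefinite with kernel $\operatorname{span}\{\phi_K^\star\}$. Hence every non-scar branch moves at first order with slope $c_j>0$, and $k_\star^2$ is simple for every small $t>0$ in a single step.

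\textbf{Comparison.} The paper's route avoids the iteration and the bookkeeping of the hypothesis at intermediate base points. It also yields a whole interval of admissible parameters and a linear gap of order $c\,t$ between the scar eigenvalue and the other branches. Yours realizes the perturbation as a finite sequence of single-edge moves, at the price of up to $\dim E_\star-1$ successive steps.

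\textbf{Your last paragraph.} The concerns there are not real obstacles, and the multiparameter reasoning should be dropped. Along a one-parameter line, any non-constant analytic branch already leaves $k_\star^2$ for all small $s\neq0$, whatever the order of vanishing of its derivative. Branches that stay constant are handled at the next step. Appealing to a branch with zero derivative in every off-support direction would require joint analytic selection of eigenbranches in several parameters, which is not available at degenerate points. Your third paragraph already suffices without it.
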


\begin{proof}
Let the ambient eigenspace at the scar eigenvalue be
\begin{equation}
\label{eq:regularization-eigenspace}
E_\star
=
\ker(-\Delta_{\G,\ell^\star}-k_\star^2).
\end{equation}
The eigenspace \eqref{eq:regularization-eigenspace} may have dimension greater
than one. Consider the one-parameter off-support deformation
\begin{equation}
\label{eq:off-support-deformation}
\ell_e(t)=
\begin{cases}
\ell_e^\star, & e\subset K,\\
(1-t)\ell_e^\star, & e\not\subset K,
\end{cases}
\qquad |t|\ll1.
\end{equation}
The deformation \eqref{eq:off-support-deformation} leaves every length on
\(K\) unchanged. Since \(\phi_K^\star\) vanishes identically on
\(\G\setminus K\), its restriction to \(K\), extended by zero to the deformed
off-support edges, continues to satisfy the same edge equations and vertex
conditions. Thus
\begin{equation}
\label{eq:preserved-scar-branch}
-\Delta_{\G,\ell(t)}\phi_K^\star
=
k_\star^2\phi_K^\star
\end{equation}
for every sufficiently small \(t\), where the same notation is used for this
natural continuation.

To analyze the remaining branches, put each edge on the fixed reference
interval \([0,1]\) by \(x=\ell_e(t)y\). Let \(\mathcal H\) be the fixed form
domain consisting of edgewise \(H^1(0,1)\) functions satisfying the vertex
continuity conditions and the prescribed exterior Dirichlet conditions.
Kirchhoff and exterior Neumann conditions are natural operator conditions and
do not alter \(\mathcal H\). In these coordinates, the eigenvalue problem is
\begin{equation}
\label{eq:generalized-eigenproblem}
a_t(u,v)=\lambda(t)m_t(u,v),
\qquad v\in\mathcal H,
\end{equation}
where
\begin{align}
\label{eq:regularization-forms}
a_t(u,v)
&=
\sum_{e\in\E}\frac{1}{\ell_e(t)}
\int_0^1 u'_e\overline{v'_e}\,\dd y,
\\
m_t(u,v)
&=
\sum_{e\in\E}\ell_e(t)
\int_0^1 u_e\overline{v_e}\,\dd y.
\end{align}
The forms in \eqref{eq:regularization-forms} depend analytically on \(t\), the
form \(m_t\) is uniformly positive definite for \(|t|\) sufficiently small,
and the associated generalized problem has a compact resolvent. After the
standard analytic conjugation by the positive form \(m_t\), this becomes an
analytic self-adjoint family on a fixed Hilbert space. Degenerate analytic
perturbation theory~\cite{Kato1995}, therefore, shows that the first derivatives
of the eigenvalue branches issuing from \(k_\star^2\) are the generalized
eigenvalues, relative to \(m_0\), of the compressed Hermitian form
\begin{equation}
\label{eq:compressed-variation-form}
Q_{\mathrm{off}}(f,g)
:=
a'_0(f,g)-k_\star^2m'_0(f,g),
\qquad f,g\in E_\star,
\end{equation}
where \(E_\star\) is identified with its fixed-coordinate representatives.\\
Since \(\ell'_e(0)=0\) on \(K\) and
\(\ell'_e(0)=-\ell_e^\star\) off \(K\), differentiating
\eqref{eq:regularization-forms} gives
\begin{equation}
\label{eq:off-support-variation-reference}
Q_{\mathrm{off}}(f,g)
=
\sum_{e\not\subset K}
\left[
\frac{1}{\ell_e^\star}
\int_0^1 f'_e\overline{g'_e}\,\dd y
+
k_\star^2\ell_e^\star
\int_0^1 f_e\overline{g_e}\,\dd y
\right].
\end{equation}
Equivalently, in physical coordinates at \(t=0\),
\begin{equation}
\label{eq:off-support-variation-form}
Q_{\mathrm{off}}(f,g)
=
\sum_{e\not\subset K}
\int_e
\left(
\partial_x f_e\,\overline{\partial_x g_e}
+
k_\star^2 f_e\overline{g_e}
\right)\,\dd x.
\end{equation}
It follows from \eqref{eq:off-support-variation-form} that
\(Q_{\mathrm{off}}(f,f)\ge0\), with equality if and only if \(f\) vanishes
identically on every edge of \(\G\setminus K\). By the one-dimensional
supported-state hypothesis,
\begin{equation}
\label{eq:variation-kernel}
\ker Q_{\mathrm{off}}
=
\{f\in E_\star:\supp f\subseteq K\}
=
\operatorname{span}\{\phi_K^\star\}.
\end{equation}
Since \(m_0\) is positive definite on \(E_\star\) and
\(Q_{\mathrm{off}}\) is positive semidefinite with the one-dimensional kernel
in \eqref{eq:variation-kernel}, the generalized eigenvalue problem for
\((Q_{\mathrm{off}},m_0)\) has exactly one zero eigenvalue, while all remaining
generalized eigenvalues are strictly positive. Hence, exactly one eigenvalue
branch issuing from \(k_\star^2\) has zero first derivative. By
\eqref{eq:preserved-scar-branch}, this is the preserved scar branch,
\[
\lambda_{\mathrm{scar}}(t)\equiv k_\star^2.
\]
After analytic labeling, every other branch issuing from \(k_\star^2\)
satisfies
\begin{equation}
\label{eq:regularization-moving-branches}
\lambda_j(t)
=
k_\star^2+c_jt+o(t),
\qquad c_j>0.
\end{equation}
Because \(k_\star^2\) is an isolated eigenvalue at \(t=0\), spectral
continuity provides a neighborhood of \(k_\star^2\) containing, for all
sufficiently small \(t\), only the branches issuing from this eigenvalue.
Equation \eqref{eq:regularization-moving-branches} then shows that for every
sufficiently small \(t>0\), the preserved eigenvalue \(k_\star^2\) is simple.
Since \(t\) may be chosen arbitrarily small, the required perturbation may be
made arbitrarily small.
\end{proof}

\begin{remark}
\label{rem:regularization-scope}
The exterior Dirichlet--Neumann assignment is fixed during the deformation.
No boundary parameter is varied. Hence, mixed exterior conditions introduce
no additional perturbative term.
\end{remark}

\section{Secular geometry and arithmetic scar accessibility}
\label{sec:secular-transfer}

We now transfer a regular auxiliary scar to exact eigensequences of a fixed
target metric.  The target metric need not be rationally independent.

\subsection{Unitary secular formulation}

Orient every edge in both directions and let \(\mathcal B\) denote the set of
directed bonds. For a Kirchhoff vertex \(v\) of degree \(d_v\), the local
scattering matrix is
\begin{equation}
\label{eq:kirchhoff-scattering}
S_v=\frac{2}{d_v}J_{d_v}-I_{d_v}.
\end{equation}
At an exterior Dirichlet or Neumann vertex, the scalar reflection coefficient
is \(S_D=-1\) or \(S_N=+1\), respectively.
The local rule \eqref{eq:kirchhoff-scattering}, together with the exterior reflection coefficients, assembles into a unitary scattering matrix \(S_B\) that is
independent of the frequency \(k\).

Set \(\T=\R/(2\pi\Z)\). For
\(\theta=(\theta_e)_{e\in\E}\in\T^\E\), let \(D(\theta)\) denote propagation by the phase \(e^{i\theta_e}\) on both
directed bonds associated with \(e\), and define the secular unitary
\begin{equation}
\label{eq:secular-unitary}
U_B(\theta)=D(\theta)S_B.
\end{equation}
For a directed-bond vector \(a\in\C^{\mathcal B}\), write
\begin{equation}
\label{eq:bond-edge-intensity}
I_e(a):=|a_{e,+}|^2+|a_{e,-}|^2,
\qquad e\in\E,
\end{equation}
The intensity in \eqref{eq:bond-edge-intensity} will be used both in the Gauss identity and in physical mass estimates.  Here, \(a_{e,+}\) and \(a_{e,-}\) are the two directed-bond coordinates
associated with the unoriented edge \(e\).  This quantity is independent of
the choice of which orientation is denoted by \(+\) or \(-\).
The associated secular set is
\begin{equation}
\label{eq:secular-set}
Z_B=
\left\{
\theta\in\T^\E:
\det(I-U_B(\theta))=0
\right\}.
\end{equation}
By \eqref{eq:secular-unitary}--\eqref{eq:secular-set}, the target metric \(\ell\) satisfies
\begin{equation}
\label{eq:spectral-secular-condition}
k^2\in\spec(-\Delta_{\G,\ell})
\quad\Longleftrightarrow\quad
[k\ell]\in Z_B.
\end{equation}

\begin{lemma}[Spectral--secular multiplicity correspondence]
\label{lem:secular-multiplicity}
Let \(k>0\) and \(\ell\in(0,\infty)^\E\). Then reconstruction from directed
bond amplitudes defines a linear isomorphism
\begin{equation}
\label{eq:spectral-secular-isomorphism}
\ker\bigl(I-U_B(k\ell)\bigr)
\cong
\ker\bigl(-\Delta_{\G,\ell}-k^2\bigr).
\end{equation}
In particular, the two spaces in
\eqref{eq:spectral-secular-isomorphism} have the same dimension. Hence
\(k^2\) is a simple Laplacian eigenvalue if and only if \(1\) is a simple
eigenvalue of \(U_B(k\ell)\).
\end{lemma}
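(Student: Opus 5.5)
We will prove \cref{lem:secular-multiplicity} by writing down the reconstruction map explicitly and checking that it is well defined, injective and surjective. Fix \(k>0\) and \(\ell\). For each directed bond \(b=(e,\pm)\) we use a local coordinate \(x_b\in[0,\ell_e]\) that starts at the initial vertex of \(b\), so that \(x_{(e,-)}=\ell_e-x_{(e,+)}\).

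\textbf{Step 1: the map.} Given \(a\in\C^{\mathcal B}\), define on each edge
\[
u_e(x)=a_{e,+}e^{ikx}+a_{e,-}e^{ik(\ell_e-x)},
\]
where \(x=x_{(e,+)}\). Conversely, every solution of \(-u''=k^2u\) on an edge has a unique representation of this form: since \(k>0\), the functions \(e^{\pm ikx}\) are linearly independent. Thus \(\Phi:a\mapsto u\) is a linear bijection from \(\C^{\mathcal B}\) onto the space of edgewise solutions of the Helmholtz equation, and in particular it is injective.

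\textbf{Step 2: vertex conditions.} We show that \(u=\Phi(a)\) satisfies the vertex conditions if and only if \(a=U_B(k\ell)a\). At a vertex \(v\), the amplitude outgoing on bond \(b\) is \(a_b\). The amplitude incoming along the reversed bond \(\bar b\) is \(e^{ik\ell_e}a_{\bar b}\), which is the propagated value at \(v\).

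At a Kirchhoff vertex of degree \(d\), write the values as \(u_b(v)=a^{\rm out}_b+a^{\rm in}_b\) and the outward derivatives as \(ik(a^{\rm out}_b-a^{\rm in}_b)\) (up to a uniform sign convention). Continuity and the zero-sum derivative condition are then \(d\) linear equations in \((a^{\rm in},a^{\rm out})\). This is the standard computation: they are equivalent to \(a^{\rm out}=S_va^{\rm in}\) with \(S_v=\tfrac2dJ_d-I_d\).

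To verify this, continuity says \(a^{\rm out}+a^{\rm in}=c\mathbf 1\), and Kirchhoff gives \(\sum_b(a^{\rm out}_b-a^{\rm in}_b)=0\). Together these give \(c=\tfrac2d\sum_b a^{\rm in}_b\), hence \(a^{\rm out}=\tfrac2dJa^{\rm in}-a^{\rm in}\). The argument reverses, and \(k\neq0\) is used to divide out the factor \(ik\). Loops are handled by counting their two incidences separately.

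At a Dirichlet vertex, \(a^{\rm out}=-a^{\rm in}\). At a Neumann vertex, \(a^{\rm out}=a^{\rm in}\), again using \(k\neq0\).

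Assembling over all vertices, the vertex conditions become \(a=S_B\,(\text{propagated incoming})\). With the convention \(U_B=D(\theta)S_B\), this is the fixed-point equation \(a=U_B(k\ell)a\), possibly after relabelling amplitudes at bond starts versus ends. The relabelling is an invertible diagonal change of variables, and \(\ker(I-D S_B)\cong\ker(I-S_B D)\) via \(a\mapsto S_Ba\) or \(a\mapsto D a\). So either convention gives an isomorphic kernel.

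\textbf{Step 3: conclusion.} Edgewise Helmholtz solutions that satisfy all the vertex conditions lie in the operator domain and are exactly the eigenfunctions in \(\ker(-\Delta_{\G,\ell}-k^2)\). Therefore \(\Phi\) restricts to a linear isomorphism from \(\ker(I-U_B(k\ell))\) onto \(\ker(-\Delta_{\G,\ell}-k^2)\), and the two spaces have equal dimension. The statement about simplicity follows immediately.

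The only delicate point is bookkeeping. One must fix the bond and orientation conventions so that the matrix obtained in Step 2 is exactly \(D(\theta)S_B\) rather than a conjugate of it, and loops and parallel edges must be handled through incidences. The conjugation remark in Step 2 disposes of this, so we expect no genuine obstacle. The hypothesis \(k>0\) is essential: at \(k=0\) the exponentials degenerate and the correspondence fails.
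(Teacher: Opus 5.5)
Your proof is correct and follows essentially the same route as the paper. Both arguments use unique traveling-wave coefficients (since \(k>0\)), local vertex scattering \(a^{\rm out}=S_Ba^{\rm in}\) together with propagation \(a^{\rm in}=D(k\ell)a^{\rm out}\), and the resulting fixed-point equation; your explicit derivation of \(S_v\) and the remark \(\ker(I-DS_B)\cong\ker(I-S_BD)\) only make the bookkeeping more explicit. One step you skip with ``follows immediately'' is one the paper states: \(U_B(k\ell)\) is unitary, so the eigenvalue \(1\) is semisimple and its algebraic multiplicity equals \(\dim\ker(I-U_B(k\ell))\), which is what makes simplicity of \(1\) for \(U_B\) equivalent to simplicity of \(k^2\).
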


\begin{proof}
Let \(u\) be an eigenfunction at energy \(k^2\). On each edge, write \(u\) in
traveling-wave form and collect the incoming and outgoing coefficients into
the directed-bond amplitude vectors \(a^{\rm in}\) and \(a^{\rm out}\).
The vertex conditions and edge propagation give
\begin{equation}
\label{eq:bond-amplitude-relations}
a^{\rm out}=S_Ba^{\rm in},
\qquad
a^{\rm in}=D(k\ell)a^{\rm out}.
\end{equation}
With convention
$$
U_B(k\ell)=D(k\ell)S_B,
$$
the relations \eqref{eq:bond-amplitude-relations} imply
$$
a^{\rm in}=U_B(k\ell)a^{\rm in}.
$$
Thus, the traveling-wave coefficients define a linear map

$$
\ker\bigl(-\Delta_{\G,\ell}-k^2\bigr)
\longrightarrow
\ker\bigl(I-U_B(k\ell)\bigr).
$$
Conversely, let
\(a^{\rm in}\in\ker(I-U_B(k\ell))\) and set
\(a^{\rm out}=S_Ba^{\rm in}\). The fixed-point relation then gives
$$
a^{\rm in}=D(k\ell)a^{\rm out}.
$$
Using these incoming and outgoing coefficients on each edge reconstructs a
solution of \(-u''=k^2u\). The scattering relation enforces the prescribed
Kirchhoff or exterior Dirichlet--Neumann vertex conditions, while the
propagation relation identifies the amplitudes at the two ends of each edge.
Hence, the reconstructed function belongs to
\(\ker(-\Delta_{\G,\ell}-k^2)\).

Since \(k>0\), the two traveling waves on each edge are linearly independent.
The traveling-wave coefficients of an edge solution are therefore unique.
Consequently, the two constructions above are inverse linear maps, proving
the isomorphism \eqref{eq:spectral-secular-isomorphism}.
Finally, \(U_B(k\ell)\) is unitary, so the eigenvalue \(1\) is semisimple and
its algebraic and geometric multiplicities coincide. Therefore,
$$
\dim\ker\bigl(I-U_B(k\ell)\bigr)
=
\dim\ker\bigl(-\Delta_{\G,\ell}-k^2\bigr),
$$
which proves the multiplicity statement.
\end{proof}

\begin{lemma}[Mixed-boundary Gauss identity]
\label{lem:gauss}
Let \(\theta^\star\in Z_B\) be a point at which \(1\) is a simple eigenvalue of
\(U_B(\theta^\star)\).
Then, in a neighborhood of \(\theta^\star\), there are a smooth normalized
eigenvector \(a(\theta)\) and a smooth eigenphase \(\varphi(\theta)\) such that
\begin{equation}
\label{eq:local-secular-eigenpair}
U_B(\theta)a(\theta)
=
e^{i\varphi(\theta)}a(\theta),
\qquad
\varphi(\theta^\star)=0.
\end{equation}
Moreover, for every edge \(e\), the normal derivative is given by
\begin{equation}
\label{eq:gauss-intensity}
\frac{\partial\varphi}{\partial\theta_e}(\theta)
=
I_e(a(\theta))
=
|a_{e,+}(\theta)|^2+|a_{e,-}(\theta)|^2.
\end{equation}
\end{lemma}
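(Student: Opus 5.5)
The plan has two parts. First, analytic perturbation theory for a simple eigenvalue of the unitary family \(U_B(\theta)=D(\theta)S_B\) gives the smooth eigenpair. Second, a Hellmann--Feynman computation, in which the diagonal structure of \(D(\theta)\) makes the derivative an edge intensity.

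\emph{Step 1 (smooth eigenpair).} The map \(\theta\mapsto U_B(\theta)\) is real-analytic, since \(D(\theta)\) is diagonal with entries \(e^{i\theta_e}\) and \(S_B\) is constant. At \(\theta^\star\) the eigenvalue \(1\) of \(U_B(\theta^\star)\) is simple and, by unitarity, isolated on the unit circle. Take a small circle \(\Gamma\) around \(1\) that separates it from the rest of the spectrum. The Riesz projection
\[
P(\theta)=\frac{1}{2\pi i}\oint_\Gamma (z-U_B(\theta))^{-1}\dd z
\]
is then analytic and of rank one near \(\theta^\star\). Set \(\tilde a(\theta)=P(\theta)a^\star\) with \(a^\star\) a unit eigenvector at \(\theta^\star\), and normalize \(a=\tilde a/\|\tilde a\|\). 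The eigenvalue \(\lambda(\theta)=\langle U_B(\theta)a,a\rangle\) is smooth with \(|\lambda|=1\) and \(\lambda(\theta^\star)=1\). Hence a smooth real logarithm \(\varphi\) with \(\lambda=e^{i\varphi}\) and \(\varphi(\theta^\star)=0\) exists locally. This proves \eqref{eq:local-secular-eigenpair}.

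\emph{Step 2 (Hellmann--Feynman).} Differentiate \(U_Ba=e^{i\varphi}a\) in \(\theta_e\) and take the inner product with \(a\). Because \(U_B\) is unitary, \(U_B^*a=e^{-i\varphi}a\), so the terms involving \(\partial_{\theta_e}a\) cancel. This leaves
\[
i\,\partial_{\theta_e}\varphi\, e^{i\varphi}=\langle (\partial_{\theta_e}U_B)a,a\rangle .
\]
Next, \(\partial_{\theta_e}U_B=iP_eD(\theta)S_B=iP_eU_B\), where \(P_e\) is the diagonal projection onto the two directed bonds of \(e\). Therefore
\[
\langle(\partial_{\theta_e}U_B)a,a\rangle=i e^{i\varphi}\langle P_ea,a\rangle=ie^{i\varphi}\bigl(|a_{e,+}|^2+|a_{e,-}|^2\bigr),
\]
which gives \(\partial_{\theta_e}\varphi=I_e(a(\theta))\), i.e.\ \eqref{eq:gauss-intensity}. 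The right-hand side is independent of the phase normalization of \(a\), so \eqref{eq:gauss-intensity} is well defined.

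\emph{Obstacle and boundary terms.} The only delicate point is the bookkeeping for the mixed boundary conditions. Exterior Dirichlet and Neumann vertices enter only through the constant entries \(\mp1\) of \(S_B\), and these do not depend on \(\theta\). Each edge, including pendant edges and loops, still contributes exactly its two directed bonds to \(D(\theta)\). So \(\partial_{\theta_e}D\) is \(i\) times the projection onto those two bonds whatever the vertex types, and no additional boundary contribution appears. I would verify this convention explicitly: both directed bonds of a pendant edge \(e\) carry the phase \(e^{i\theta_e}\). This check guarantees the intensity counts both \(a_{e,\pm}\).
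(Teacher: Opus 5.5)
Your proof is correct and follows essentially the paper's argument: you obtain the smooth eigenpair from perturbation theory for the simple eigenvalue, then apply a unitary Hellmann--Feynman computation using \(\partial_{\theta_e}D=iP_eD\). The differences are cosmetic: you construct the eigenpair explicitly via the Riesz projection and evaluate the derivative directly through \(\partial_{\theta_e}U_B=iP_eU_B\), whereas the paper passes through \(-iU_B^\ast\partial_{\theta_e}U_B=S_B^\ast P_eS_B\) and then uses \(S_Ba=e^{i\varphi}D(\theta)^\ast a\).
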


\begin{proof}
Since \(1\) is a simple eigenvalue, standard perturbation theory for finite
dimensional unitary matrices yields the local smooth eigenpair.
Differentiate \eqref{eq:local-secular-eigenpair} with respect to
\(\theta_e\), take the inner product with \(a\), and use
\(\|a\|=1\). The unitary Hellmann--Feynman identity gives
\[
\partial_{\theta_e}\varphi
=
-i\,a^\ast U_B^\ast
(\partial_{\theta_e}U_B)a.
\]
Let \(P_e\) be the orthogonal projector onto the two directed-bond coordinates
associated with \(e\). Since \(\partial_{\theta_e}D=iP_eD\),
\[
-i\,U_B^\ast(\partial_{\theta_e}U_B)=S_B^\ast P_eS_B.
\]
At every point of the chosen local eigenbranch,
\(D(\theta)S_Ba=e^{i\varphi}a\),
so \(S_Ba=e^{i\varphi}D(\theta)^\ast a\). The diagonal unitary
\(D(\theta)^\ast\) preserves componentwise moduli and commutes with \(P_e\).
Hence,
\[
\partial_{\theta_e}\varphi
=\langle S_Ba,P_eS_Ba\rangle
=\langle a,P_ea\rangle
=|a_{e,+}|^2+|a_{e,-}|^2.
\]
The signs \(S_D=-1\) and \(S_N=+1\) are contained in the fixed unitary matrix
\(S_B\) and therefore do not alter the derivative identity.
\end{proof}

\begin{corollary}[Positive transversality]
\label{cor:transversality}
Let \(\theta^\star\in Z_B\) be a simple secular point whose corresponding
eigendirection is supported on a nonempty subgraph \(K\subset\G\), and let
\(\ell\in(0,\infty)^\E\) be any positive target metric.
Then
\begin{equation}
\label{eq:positive-transversality}
D_\ell\varphi(\theta^\star)
=
\ell\cdot\nabla\varphi(\theta^\star)
=
\sum_{e\subset K}
\ell_e
\left(
|a_{e,+}^\star|^2+|a_{e,-}^\star|^2
\right)
>0.
\end{equation}
\end{corollary}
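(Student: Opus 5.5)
The plan is to apply \cref{lem:gauss} at the simple secular point \(\theta^\star\) and then use that the target metric enters only through the directional derivative along \(\ell\). Since \(1\) is a simple eigenvalue of \(U_B(\theta^\star)\), \cref{lem:gauss} supplies a smooth normalized eigenbranch \(a(\theta)\) with eigenphase \(\varphi(\theta)\), \(\varphi(\theta^\star)=0\), and \(\partial_{\theta_e}\varphi=I_e(a(\theta))\). The chain rule then gives
\[
D_\ell\varphi(\theta^\star)=\frac{d}{ds}\Big|_{s=0}\varphi(\theta^\star+s\ell)=\sum_{e\in\E}\ell_e\,I_e(a^\star),
\qquad a^\star:=a(\theta^\star),
\]
which is the first two equalities in \eqref{eq:positive-transversality}. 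Note that \(\ell\) is a vector in \(\R^\E\) acting on the torus through its universal cover, so no relation between \(\ell\) and \(\theta^\star\) is needed.

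Next, restrict the sum to \(K\). By \cref{lem:secular-multiplicity}, the fixed vector \(a^\star\in\ker(I-U_B(\theta^\star))\) corresponds to a unique eigenfunction. On each edge \(e\), this eigenfunction is the traveling-wave superposition with coefficients \(a^\star_{e,\pm}\) (up to the unimodular propagation phases, which do not affect moduli). Here I read ``eigendirection supported on \(K\)'' at the bond level: the reconstructed function vanishes on every \(e\not\subset K\). Because the two exponentials are linearly independent on an edge (the nonzero frequency is implicit in the reconstruction), vanishing on \(e\) forces \(a^\star_{e,+}=a^\star_{e,-}=0\), so \(I_e(a^\star)=0\) off \(K\). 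This removes the off-support terms and gives the displayed sum over \(e\subset K\).

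Positivity follows from three facts: every \(\ell_e>0\), every \(I_e\ge0\), and \(\sum_e I_e(a^\star)=\|a^\star\|^2=1\). Since the off-\(K\) intensities vanish, \(\sum_{e\subset K}I_e(a^\star)=1\), so some edge of \(K\) has positive intensity. Hence the sum is at least \(\min_{e\subset K}\ell_e>0\).

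The step that needs the most care is identifying the support of the eigendirection with the vanishing of the bond intensities, which requires being explicit about the convention. If \(\theta^\star\) is not of the form \(k\ell'\) for a literal metric, I will interpret ``supported on \(K\)'' directly as \(a^\star_{e,\pm}=0\) for \(e\not\subset K\); the argument is then immediate and needs no reconstruction. Either way the essential inputs are only the Gauss identity and the normalization \(\|a^\star\|=1\).
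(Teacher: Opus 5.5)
Your proposal is correct and follows essentially the same route as the paper. The paper applies the Gauss identity \(\partial_{\theta_e}\varphi=I_e(a)\) to get \(D_\ell\varphi(\theta^\star)=\sum_e\ell_e I_e(a^\star)\), drops the off-\(K\) terms because the bond amplitudes vanish there, and obtains positivity from \(\ell_e>0\) and \(a^\star\neq0\). You make that last step explicit through \(\sum_e I_e(a^\star)=\|a^\star\|^2=1\), which gives the lower bound \(\min_{e\subset K}\ell_e\).

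Your worry about \(\theta^\star\) not being of the form \([k\ell']\) is unnecessary. Every phase class has a representative in \((0,2\pi]^\E\), so you may always take \(k=1\) and a positive auxiliary metric.
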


\begin{proof}
For a scar supported on \(K\), the bond amplitudes vanish on every edge outside
\(K\). By \Cref{lem:gauss} and \eqref{eq:gauss-intensity}, every summand in \eqref{eq:positive-transversality} is
nonnegative. At least one active edge has nonzero bond amplitude, and every
target length is strictly positive. Hence, the sum is strictly positive.
\end{proof}

\begin{definition}[Regular scar stratum]
\label{def:regular-scar-stratum}
For a proper primitive subgraph \(K\), let \(\mathcal R_K\subset Z_B\) denote
the set of secular phases \(\theta\) for which \(1\) is a simple eigenvalue of
\(U_B(\theta)\) and the corresponding eigendirection is supported exactly on
\(K\).
\end{definition}

\begin{corollary}[Nonemptiness of the regular scar stratum]
\label{cor:regular-scar-nonempty}
For every proper simple cycle and every proper simple exterior-to-exterior path
\(K\), with any fixed endpoint type in the path case,
\begin{equation}
\label{eq:regular-scar-nonempty}
\mathcal R_K\neq\varnothing.
\end{equation}
\end{corollary}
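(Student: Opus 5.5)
The plan is to build an explicit point of \(\mathcal R_K\) from an auxiliary model scar, which is first regularized and then read off through the spectral--secular dictionary. First, take the auxiliary metric \(\ell^\star\), the frequency \(k_\star\) and the eigenfunction \(\phi_K^\star\) supported exactly on \(K\). For a proper exterior-to-exterior path with the prescribed endpoint type, these come from \cref{prop:path-model-scar}. For a proper simple cycle, they come from \cref{prop:cycle-model-scar}, whose hypothesis \(C\neq\G\) is exactly properness. In the path case, \cref{lem:path-supported-unique} shows that the \(K\)-supported eigendirections at \(k_\star^2\) form a one-dimensional space. In the cycle case, this is the second assertion of \cref{prop:cycle-model-scar}. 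The hypothesis of \cref{prop:simplicity} is therefore satisfied.

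Next, apply \cref{prop:simplicity}. It yields a perturbed metric \(\tilde\ell\), obtained by changing only the off-support lengths, so that \(\tilde\ell_e=\ell^\star_e\) for \(e\subset K\). At \(\tilde\ell\), the number \(k_\star^2\) is a simple eigenvalue of \(-\Delta_{\G,\tilde\ell}\), and its eigenfunction is the zero-extension of \(\phi_K^\star\), still supported exactly on \(K\). The perturbation keeps every length positive, so \(\tilde\ell\) is admissible. Set
\[
\theta^\star:=[k_\star\tilde\ell]\in\T^\E.
\]
By \eqref{eq:spectral-secular-condition}, \(\theta^\star\in Z_B\). By \cref{lem:secular-multiplicity}, the space \(\ker(I-U_B(\theta^\star))\) is one-dimensional, so \(1\) is a simple eigenvalue of \(U_B(\theta^\star)\).

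It remains to check that the secular eigendirection is supported exactly on \(K\). The isomorphism of \cref{lem:secular-multiplicity} assigns to an eigenfunction its traveling-wave coefficients edge by edge. On an edge \(e\), we have \(u_e\equiv0\) if and only if \(a_{e,+}=a_{e,-}=0\), because \(e^{\pm ik x}\) are independent for \(k>0\). Since \(\phi_K^\star\) vanishes identically off \(K\) and is nonzero on every edge of \(K\), the bond-support of the eigenvector is exactly the set of directed bonds over \(K\). Hence \(\theta^\star\in\mathcal R_K\), which proves \eqref{eq:regular-scar-nonempty}.

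The main obstacle is to confirm that ``supported exactly on \(K\)'' is not lost in the regularization step. The edgewise claim follows if \(\phi_K^\star\) has no edge of \(K\) on which it vanishes identically. This holds for the sine and cosine constructions: on each edge the function is a nonzero multiple of a sinusoid, since every amplitude is propagated as \(A_{j+1}=\pm A_j\neq0\). The second point is that the regularization must not enlarge the eigenspace with new \(K\)-supported states. This is automatic, because simplicity forces the eigenspace to be exactly \(\operatorname{span}\{\phi_K^\star\}\).
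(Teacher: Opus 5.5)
Your proposal is correct and follows the paper's own argument: you take the model scar from \cref{prop:path-model-scar} or \cref{prop:cycle-model-scar} together with its one-dimensional $K$-supported eigendirection, regularize off the support via \cref{prop:simplicity}, and identify $[k_\star\tilde\ell]$ as a point of $\mathcal R_K$ through \cref{lem:secular-multiplicity}. Your explicit check that the bond eigenvector is supported exactly on the directed bonds over $K$ is a detail the paper leaves implicit; it rests on the sinusoid being nonvanishing on every edge of $K$ and on the independence of $e^{\pm ikx}$ for $k>0$.
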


Equation \eqref{eq:regular-scar-nonempty} is the regularity input used in the accessibility theory below.

\begin{proof}
The model-scar results in \Cref{sec:model-scars} provide a \(K\)-supported
auxiliary eigenstate and a one-dimensional \(K\)-supported eigendirection.
\Cref{prop:simplicity} makes the ambient eigenvalue simple by an arbitrarily
small off-support deformation while preserving the scar.  The
spectral--secular multiplicity correspondence in
\cref{lem:secular-multiplicity} then identifies the resulting phase as a point
of \(\mathcal R_K\).
\end{proof}

\subsection{Relative recurrent crossings on the orbit-closure torus}

\begin{theorem}[Relative recurrent crossing]
\label{thm:relative-crossing}
Let \(\ell\in(0,\infty)^\E\), and let \(\theta^\star\in Z_B\) be a regular
simple secular point for which
\(D_\ell\varphi(\theta^\star)\neq0\). Then, the following are equivalent:
\begin{enumerate}[label=(\roman*)]
\item \(\theta^\star\in H_\ell\);
\item there exist \(k_n\to\infty\) such that
\begin{equation}
\label{eq:relative-crossings}
[k_n\ell]\in Z_B,
\qquad
[k_n\ell]\longrightarrow\theta^\star.
\end{equation}
\end{enumerate}
\end{theorem}

\begin{proof}
If \eqref{eq:relative-crossings} holds, then
\(\theta^\star\in H_\ell\) by the definition of the orbit closure.
Conversely, assume that \(\theta^\star\in H_\ell\). Since \(H_\ell\) is a
compact subtorus invariant under the flow
\[
\Phi_t(\theta)=\theta+t\ell,
\]
the vector \(\ell\) is tangent to \(H_\ell\). The hypothesis
\(D_\ell\varphi(\theta^\star)\neq0\) implies that the restriction of
\(\varphi\) to \(H_\ell\) has nonzero differential at \(\theta^\star\).
Hence, by the implicit-function theorem on \(H_\ell\),
\(\{\varphi=0\}\cap H_\ell\) is a smooth codimension-one submanifold near
\(\theta^\star\), transverse to the flow generated by \(\ell\).

After shrinking the neighborhood, the relative flow-box theorem yields a
relatively open disk
\[
\Sigma_0\subset\{\varphi=0\}\cap H_\ell
\subset Z_B\cap H_\ell,
\]
a number \(\varepsilon>0\), and a diffeomorphism
\begin{equation}
\label{eq:relative-flow-box}
\Psi:(-\varepsilon,\varepsilon)\times\Sigma_0
\longrightarrow U_0\subset H_\ell,
\qquad
\Psi(t,\sigma)=\sigma+t\ell.
\end{equation}
Choose a relatively open disk
\(\Sigma_1\Subset\Sigma_0\) containing \(\theta^\star\), and define the
incoming slab
\[
U_-=
\Psi\bigl((-\varepsilon,-\varepsilon/2)\times\Sigma_1\bigr).
\]
The forward orbit \(\{[t\ell]:t\ge0\}\) is dense in \(H_\ell\). Indeed, its
closure is a compact subsemigroup of the compact group \(H_\ell\), hence, a
subgroup, and therefore coincides with the closure of the full one-parameter
group, namely \(H_\ell\). The same conclusion holds for every forward tail.
Consequently, for every \(T>0\) there exists \(s>T\) such that
$
[s\ell]\in U_-.
$
Write
$
[s\ell]=\Psi(-\tau,\sigma)
$
with
\(\tau\in(\varepsilon/2,\varepsilon)\) and
\(\sigma\in\Sigma_1\). By \eqref{eq:relative-flow-box},
\[
[(s+\tau)\ell]
=
\Psi(0,\sigma)
=
\sigma
\in Z_B\cap H_\ell.
\]
Thus, every sufficiently small relative flow box around
\(\theta^\star\) produces an exact secular crossing at an arbitrarily large
target frequency.
Choose nested flow boxes for which the corresponding disks
\(\Sigma_1^{(n)}\) shrink to \(\theta^\star\), and choose the successive entry
times so large that the resulting crossing times \(k_n\) satisfy
\(k_n\to\infty\). The corresponding crossing points belong to
\(\Sigma_1^{(n)}\), and hence,
\[
[k_n\ell]\longrightarrow\theta^\star.
\]
This proves \eqref{eq:relative-crossings}.
\end{proof}
By \cref{lem:orbit-closure-lattice}, the recurrence condition in
\cref{thm:relative-crossing} has the arithmetic form
\begin{equation}
\label{eq:scar-resonance-compatibility}
m\cdot\theta^\star\in2\pi\Z
\qquad\text{for every }m\in\Lambda_\ell.
\end{equation}
The condition \eqref{eq:scar-resonance-compatibility} makes the role of target resonances explicit.

\begin{definition}[Regular arithmetic accessibility]
\label{def:RAC}
For a proper primitive subgraph \(K\), define
\begin{equation}
\label{eq:RAC}
\operatorname{RAC}(K,\ell)
\quad\text{to mean}\quad
H_\ell\cap\mathcal R_K\neq\varnothing.
\end{equation}
\end{definition}
The condition in \eqref{eq:RAC} will be abbreviated as RAC below.

\subsection{Arithmetic accessibility of primitive scars}

\begin{theorem}[Arithmetic primitive-scar accessibility]
\label{thm:arithmetic-scar-accessibility}
Let \(K\subsetneq\G\) be a simple cycle or a simple path joining two exterior
vertices.  If \(\operatorname{RAC}(K,\ell)\) holds, then there exist normalized
exact eigenfunctions satisfying
\begin{equation}
\label{eq:arithmetic-scar-eigenfunctions}
-\Delta_{\G,\ell}u_n=k_n^2u_n,
\qquad
k_n\to\infty,
\qquad
\|u_n\|_{L^2(\G\setminus K)}\longrightarrow0,
\end{equation}
and
\begin{equation}
\label{eq:arithmetic-scar-measure}
|u_n|^2\dd x
\rightharpoonup
\frac1{L_K}\,\dd x\big|_K.
\end{equation}
All four exterior-path endpoint types \(DD,DN,ND,NN\) are allowed.
\end{theorem}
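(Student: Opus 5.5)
Pick \(\theta^\star\in H_\ell\cap\mathcal R_K\), which exists by \(\operatorname{RAC}(K,\ell)\). By \cref{def:regular-scar-stratum}, \(1\) is a simple eigenvalue of \(U_B(\theta^\star)\) with eigendirection \(a^\star\) supported exactly on \(K\). \Cref{cor:transversality} then gives \(D_\ell\varphi(\theta^\star)>0\). Hence \cref{thm:relative-crossing} applies and yields frequencies \(k_n\to\infty\) with \([k_n\ell]\in Z_B\) and \(\theta_n:=[k_n\ell]\to\theta^\star\).

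Simplicity is an open condition, so for large \(n\) the eigenvalue \(1\) is simple for \(U_B(\theta_n)\). The point \(\theta_n\) lies on the zero set \(\{\varphi=0\}\) of the local branch from \cref{lem:gauss}, so \(\ker(I-U_B(\theta_n))\) is spanned by the smooth eigenvector \(a(\theta_n)\), and \(a(\theta_n)\to a^\star\) after fixing phases. By \cref{lem:secular-multiplicity}, \(a(\theta_n)\) reconstructs an exact eigenfunction \(\tilde u_n\) of \(-\Delta_{\G,\ell}\) at \(k_n^2\). Set \(u_n=\tilde u_n/\|\tilde u_n\|\).

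For the mass estimate, use the exact integration identity from the proof of \cref{lem:edgewise-flatness}:
\[
\int_e|\tilde u_n|^2\dd x=\ell_e I_e(a(\theta_n))+O\bigl(k_n^{-1}I_e(a(\theta_n))\bigr).
\]
Here the edge coefficients of \(\tilde u_n\) are, up to unimodular factors, the directed-bond amplitudes, so they have the same moduli. Summing over edges gives
\[
\|\tilde u_n\|^2\to\sum_{e\subset K}\ell_eI_e(a^\star)=D_\ell\varphi(\theta^\star)>0,
\]
and for each \(e\not\subset K\),
\[
\int_e|\tilde u_n|^2\dd x\lesssim I_e(a(\theta_n))\to I_e(a^\star)=0.
\]
This proves \(\|u_n\|_{L^2(\G\setminus K)}\to0\).

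For the weak limit, apply \cref{lem:edgewise-flatness} along any subsequence. It gives a limit \(\sum_e\rho_e\dd x|_e\), and in fact
\[
\rho_e=\frac{I_e(a^\star)}{\sum_{f}\ell_fI_f(a^\star)},
\]
which vanishes off \(K\) and is positive on \(K\) because \(a^\star\) is supported exactly on \(K\). So the limit is a probability semiclassical measure with support exactly \(K\). \Cref{cor:primitive-measure-unique} then identifies it as \(L_K^{-1}\dd x|_K\). Every subsequence has the same limit, so the whole sequence converges. Endpoint types enter only through \(S_B\) and the nonemptiness result \cref{cor:regular-scar-nonempty}, so \(DD,DN,ND,NN\) are all covered.

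The main obstacle is the identification of bond amplitudes with physical edge intensities. The convergence \(a(\theta_n)\to a^\star\) happens at the secular level, and it has to be transferred to the \(L^2\) masses uniformly despite the varying frequency \(k_n\). I expect the explicit \(O(k_n^{-1})\) cross-term bound from \cref{lem:edgewise-flatness} to handle this, since the amplitudes are bounded by normalization of \(a\).
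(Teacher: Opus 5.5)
Your proposal is correct and follows the paper's proof essentially step for step. The chain is the same: pick a point $\theta^\star\in H_\ell\cap\mathcal R_K$, use positive transversality and the relative recurrent crossing, follow the persisting simple branch $a_n\to a^\star$, use exact edge integration to get $\|w_n\|^2\to\sum_e\ell_eI_e(a^\star)>0$ with vanishing off-support mass, and finish with \cref{cor:primitive-measure-unique}.

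The only minor difference is how you show the limit has support exactly $K$. You read off the densities $\rho_e=I_e(a^\star)/\sum_f\ell_fI_f(a^\star)$ and invoke the exact-support clause of \cref{def:regular-scar-stratum}. The paper uses only that the support is contained in $K$, and excludes proper subsupports via the no-interior-leaf part of \cref{lem:support-law}. Both arguments are valid.
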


\begin{proof}
Choose \(\theta^\star\in H_\ell\cap\mathcal R_K\).  By
\cref{cor:transversality}, \(D_\ell\varphi(\theta^\star)>0\), and
\cref{thm:relative-crossing} gives exact target spectral points
\([k_n\ell]\to\theta^\star\).  On a sufficiently small neighborhood the
simple local eigenbranch in \eqref{eq:local-secular-eigenpair} persists.  Choose unit eigenvectors \(a_n\) on that branch.  After a consistent phase choice,
$
a_n\longrightarrow a^\star.
$
Here, \(a^\star\) is the \(K\)-supported eigenvector at \(\theta^\star\).

Let \(w_n\) be the physical eigenfunction reconstructed from \(a_n\), and let
\(u_n=c_nw_n\) be its \(L^2\)-normalization.  Since the two traveling-wave
coefficients on an edge have squared moduli summing to \(I_e(a_n)\), exact edge
integration and convergence \(a_n\to a^\star\) show
\begin{equation}
\label{eq:physical-normalization-arithmetic}
\|w_n\|_{L^2(\G)}^2
\longrightarrow
M_\star:=\sum_{e\in\E}\ell_e I_e(a^\star)>0.
\end{equation}
By \eqref{eq:physical-normalization-arithmetic}, the normalization factors remain bounded and bounded away from zero.
For \(e\not\subset K\), \(I_e(a^\star)=0\), so the reconstructed mass on every
off-support edge tends to zero.  This gives
\eqref{eq:arithmetic-scar-eigenfunctions}.  Since \(\G\) is compact, every
subsequence of the probability measures \(|u_n|^2\dd x\) has a further weakly
convergent subsequence.  The off-support concentration forces each such limit to have support contained
in \(K\).  Since the limit is a probability measure, its support is nonempty; a
proper nonempty union of original edges contained in the primitive path or cycle
\(K\) would have an interior leaf, contrary to \cref{lem:support-law}.
Hence, the support is exactly \(K\), and
\cref{cor:primitive-measure-unique} identifies the limit with normalized
arclength.  Thus, all subsequential limits coincide, proving the full convergence
in \eqref{eq:arithmetic-scar-measure}.
\end{proof}

\begin{corollary}[Rationally independent accessibility]
\label{cor:ri-accessibility}
If \(\ell\) is rationally independent, then
\(\operatorname{RAC}(K,\ell)\) holds for every proper primitive \(K\), and
\cref{thm:arithmetic-scar-accessibility} applies to each such \(K\).
\end{corollary}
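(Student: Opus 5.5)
The plan is to combine the Kronecker description of the orbit closure with the nonemptiness of the regular scar stratum, and then invoke \cref{thm:arithmetic-scar-accessibility}.

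First, assume \(\ell\) is rationally independent in the sense of \cref{def:irrational}. Then the resonance lattice \(\Lambda_\ell\) in \eqref{eq:resonance-lattice} is \(\{0\}\). The only constraint in \eqref{eq:orbit-closure-lattice} is therefore \(0\cdot\theta\in2\pi\Z\), which every \(\theta\) satisfies. Hence \cref{lem:orbit-closure-lattice} gives \(H_\ell=\T^\E\), as already noted after \cref{def:irrational}.

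Second, let \(K\) be a proper simple cycle or a proper simple exterior-to-exterior path, with any endpoint type in the path case. By \cref{cor:regular-scar-nonempty}, \(\mathcal R_K\neq\varnothing\). Since \(\mathcal R_K\subset Z_B\subset\T^\E=H_\ell\), we get
\[
H_\ell\cap\mathcal R_K=\mathcal R_K\neq\varnothing .
\]
This is exactly \(\operatorname{RAC}(K,\ell)\) in the sense of \cref{def:RAC}.

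Third, the hypotheses of \cref{thm:arithmetic-scar-accessibility} are now met for each such \(K\), and it yields the exact scarred eigensequence.

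The only point needing care is the scope of \cref{cor:regular-scar-nonempty}. It relies on \cref{prop:simplicity}, which requires \(K\subsetneq\G\) and a one-dimensional \(K\)-supported eigendirection. For cycles this one-dimensionality comes from \cref{prop:cycle-model-scar}; for paths it comes from \cref{lem:path-supported-unique}. The hypothesis that \(K\) is proper is part of the statement, so nothing further is needed.

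A further subtlety is that \(\mathcal R_K\) is defined through \(U_B\) alone, independently of \(\ell\). The auxiliary metric \(\ell^\star\) therefore only supplies a phase \([k_\star\ell^\star]\in\mathcal R_K\). That phase automatically lies in \(H_\ell=\T^\E\), so no compatibility between \(\ell^\star\) and \(\ell\) is required. With this observation there is no real obstacle, and the argument is a direct assembly of the cited results.
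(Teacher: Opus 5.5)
Your proposal is correct and matches the paper's own proof. Rational independence gives \(\Lambda_\ell=\{0\}\), so \cref{lem:orbit-closure-lattice} yields \(H_\ell=\T^\E\), and \cref{cor:regular-scar-nonempty} supplies a point of \(\mathcal R_K\subset\T^\E=H_\ell\), which is \(\operatorname{RAC}(K,\ell)\). Your remarks on the properness hypothesis and on the independence of \(\mathcal R_K\) from \(\ell\) are accurate elaborations of what the paper's one-line argument leaves implicit.
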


\begin{proof}
Rational independence gives \(H_\ell=\T^\E\), while
\cref{cor:regular-scar-nonempty} gives \(\mathcal R_K\neq\varnothing\).
\end{proof}

\begin{definition}[Graph-wide regular accessibility class]
\label{def:AG}
Define
\begin{equation}
\label{eq:AG}
\mathcal A_\G
:=
\left\{\ell\in(0,\infty)^\E:
\operatorname{RAC}(K,\ell)
\text{ for every proper primitive }K\subset\G
\right\}.
\end{equation}
\end{definition}
The class in \eqref{eq:AG} contains every rationally independent metric by \cref{cor:ri-accessibility}.

\begin{theorem}[Mixed-boundary minimal-scar characterization]
\label{thm:minimal-scar-characterization}
Let \(\ell\in\mathcal A_\G\).  The minimal supports of fixed-metric probability
semiclassical measures are precisely the simple cycles and the simple paths
joining two exterior vertices.  On every such support \(K\), the probability
semiclassical measure is uniquely
\(L_K^{-1}\dd x|_K\).  In particular, the conclusion holds for every
rationally independent metric.
\end{theorem}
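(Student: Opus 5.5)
The proof splits into two inclusions. First, every primitive \(K\) actually occurs as a support and carries a unique measure. Second, every support of a probability semiclassical measure contains a primitive \(K\).

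\textbf{Existence and uniqueness on primitives.} Let \(\ell\in\mathcal A_\G\) and let \(K\) be a simple cycle or a simple exterior-to-exterior path. If \(K\) is proper, \(\operatorname{RAC}(K,\ell)\) holds by definition of \(\mathcal A_\G\). Then \cref{thm:arithmetic-scar-accessibility} gives exact eigenfunctions whose semiclassical measure is \(L_K^{-1}\dd x|_K\), so \(K\) is a support. If \(K=\G\), I would treat this separately: a graph that is itself a cycle or an interval. There I would exhibit explicit eigenfunctions (circle or interval modes) whose measures are flat by \cref{lem:edgewise-flatness} and \cref{cor:primitive-measure-unique}. Uniqueness of the measure on any support equal to \(K\) is exactly \cref{cor:primitive-measure-unique}.

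\textbf{Every support contains a primitive.} Let \(\mu\) be any probability semiclassical measure. By \cref{lem:edgewise-flatness}, \(\supp\mu\) is the union \(H_\mu\) of the active edges. By \cref{lem:support-law}, no interior vertex is a leaf of \(H_\mu\), counting loops twice. This is a purely combinatorial fact, and I would prove it as follows. If \(H_\mu\) contains a cycle, we are done. Otherwise \(H_\mu\) is a nonempty forest, and each of its components has at least two leaves. All leaves must be exterior vertices of \(\G\): an interior vertex of \(\G\) that is a leaf of \(H_\mu\) is forbidden, and a degree-one vertex of \(H_\mu\) that is a leaf of \(\G\) is exterior. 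The path in the tree between two leaves is then a simple exterior-to-exterior path. Hence every support contains a primitive \(K\), which itself is a support by the first step.

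\textbf{Minimality.} It remains to show that primitives are minimal and that minimal supports are primitive. A minimal support \(S\) contains a primitive \(K\), and \(K\) is a support, so minimality forces \(S=K\). For the converse, suppose a support \(S\subsetneq K\) with \(K\) primitive. Then \(S\) is a proper nonempty union of edges of a path or cycle, so it has an endpoint. That endpoint is either an interior vertex that is a leaf of \(S\), which contradicts \cref{lem:support-law}, or an exterior vertex. It cannot be that \(S\) has only exterior leaves: a subpath of a simple path with both ends at exterior vertices must be the whole path, since exterior vertices have degree one and occur only at the ends. So \(S=K\), and \(K\) is minimal.

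The last sentence of the theorem follows from \cref{cor:ri-accessibility}.

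The main obstacle is the combinatorial bookkeeping with loops and multi-edges, together with the degenerate case \(K=\G\), which lies outside RAC and needs its own explicit construction.
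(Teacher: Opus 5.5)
Your proposal is correct and follows essentially the same route as the paper. You realize each proper primitive \(K\) through \(\operatorname{RAC}(K,\ell)\) and the accessibility theorem, and handle \(K=\G\) by one-dimensional modes. You then extract a primitive from any support using the no-interior-leaf law and rule out smaller supports inside a primitive by the same law, with uniqueness from the primitive-measure corollary. Your forest-and-leaves argument only spells out the combinatorial step that the paper asserts in one line.
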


\begin{proof}
Let \(H\) be a minimal support.  By \cref{lem:support-law}, the finite subgraph
\(H\) has no leaf at an interior vertex.  Therefore, \(H\) contains either a
simple cycle or a simple path joining two exterior vertices; call it \(K\).
If \(K\subsetneq\G\), membership \(\ell\in\mathcal A_\G\) and
\cref{thm:arithmetic-scar-accessibility} realize \(K\) as the support of a
semiclassical measure for the same target metric.  Since \(K\subseteq H\),
minimality of \(H\) forces \(H=K\).  If \(K=\G\), then \(H=K\) is
immediate.

Conversely, let \(K\) be a primitive cycle or exterior-to-exterior path.  If
\(K\subsetneq\G\), its realization follows from
\cref{thm:arithmetic-scar-accessibility}.  If \(K=\G\), then the graph is a
circle or an interval up to retained degree-two subdivisions; any sequence of
normalized eigenfunctions with frequencies tending to infinity has, after choosing
a standard sine/cosine branch, the normalized-arclength weak limit by direct
one-dimensional oscillatory averaging.  Thus, \(K\) is the support of a
semiclassical measure in all cases.  Any proper nonempty support contained in a
primitive cycle or exterior-to-exterior path has an interior leaf, because
\cref{lem:edgewise-flatness} makes supports unions of whole original edges.
\Cref{lem:support-law} therefore excludes a smaller semiclassical support.
Uniqueness of the probability measure on \(K\) follows from
\cref{cor:primitive-measure-unique}.  The final statement follows from
\cref{cor:ri-accessibility}.
\end{proof}

\begin{remark}[Scope of the arithmetic criterion]
\label{rem:RAC-scope}
The condition \(\operatorname{RAC}(K,\ell)\) characterizes accessibility through
the regular-scar mechanism used here.  We do not claim that
\(\operatorname{RAC}(K,\ell)\) is necessary for every possible scar sequence:
a singular secular stratum could, in principle, provide a different
accessibility mechanism.
\end{remark}

\section{Quantitative Diophantine scar accessibility}
\label{sec:quantitative}

We now quantify the regular-scar mechanism.  Put
\begin{equation}
\label{eq:intrinsic-dimension}
r_\ell:=\dim H_\ell.
\end{equation}
With \(r_\ell\) fixed by \eqref{eq:intrinsic-dimension}, choose a Lie-group isomorphism induced by integral lattice coordinates
\(\Xi:\T^{r_\ell}\to H_\ell\) and an intrinsic flow direction
\(\beta\in\R^{r_\ell}\) satisfying
\(\Xi([t\beta])=[t\ell]\).  With fixed flat metrics, \(\Xi\) and its
inverse are bi-Lipschitz.  Thus, changing integral coordinates changes the constants
below but not the exponents or the substance of the condition.

\begin{definition}[Intrinsic Diophantine condition]
\label{def:intrinsic-diophantine}
Assume \(r_\ell\ge2\).  Put \(\widehat\beta:=\beta/\|\beta\|\).  We say that the intrinsic flow is \((\gamma,\tau)\)-Diophantine if
\begin{equation}
\label{eq:intrinsic-diophantine}
|q\cdot\widehat\beta|
\ge\gamma\|q\|^{-\tau}
\qquad
\text{for every }q\in\Z^{r_\ell}\setminus\{0\},
\qquad
\tau>r_\ell-1.
\end{equation}
\end{definition}
The filling-time estimate of Dumas--Fischler~\cite{DumasFischler2022} implies
that a \((\gamma,\tau)\)-Diophantine linear flow becomes \(\rho\)-dense in the
intrinsic torus within time at most \(C\rho^{-\tau}\), with \(C\) depending on
the intrinsic dimension and Diophantine constants.

\begin{theorem}[Quantitative recurrent crossing]
\label{thm:quantitative-crossing}
Let \(\theta^\star\in H_\ell\cap\mathcal R_K\).  If \(r_\ell\ge2\) and the
intrinsic target flow satisfies \eqref{eq:intrinsic-diophantine}, then there are
exact target spectral points \(\theta_n=[k_n\ell]\in Z_B\) with \(k_n\to\infty\)
such that
\begin{equation}
\label{eq:quantitative-crossing}
\mathrm{dist}_{H_\ell}(\theta_n,\theta^\star)
\le C k_n^{-1/\tau}.
\end{equation}
If \(r_\ell=1\), the continuous flow on \(H_\ell\) is periodic and
\(\theta^\star\in H_\ell\) gives arbitrarily large exact returns to
\(\theta^\star\).
\end{theorem}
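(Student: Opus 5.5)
The plan is to make the qualitative flow-box argument of \cref{thm:relative-crossing} quantitative, working in the intrinsic coordinates \(\Xi:\T^{r_\ell}\to H_\ell\).

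\textbf{Local geometry near \(\theta^\star\).} Since \(\theta^\star\in\mathcal R_K\), \cref{lem:gauss} gives a smooth eigenphase \(\varphi\) on a neighborhood \(V\) of \(\theta^\star\). By \cref{cor:transversality}, \(c_0:=D_\ell\varphi(\theta^\star)>0\). Shrinking \(V\), we may assume \(D_\ell\varphi\ge c_0/2\) on \(V\cap H_\ell\). Take the relative flow box \(\Psi(t,\sigma)=\sigma+t\ell\) of \eqref{eq:relative-flow-box}, with \(\Sigma_0\subset\{\varphi=0\}\cap H_\ell\). Both \(\Psi\) and its inverse are Lipschitz with fixed constants on a fixed compact box. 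Consequently there is \(c_1>0\) such that, for all small \(\rho\), the intrinsic ball \(B_{H_\ell}(\theta^\star-2\rho\ell/\|\ell\|,\rho)\) sits inside the image \(\Psi\bigl((-c_1^{-1}\rho,-c_1\rho)\times\Sigma_\rho\bigr)\). Here \(\Sigma_\rho\) is the part of \(\Sigma_0\) within distance \(C\rho\) of \(\theta^\star\). The mechanism is this: any orbit point in that ball flows forward, in time \(O(\rho)\), to a point of \(\Sigma_0\subset Z_B\) lying within \(C\rho\) of \(\theta^\star\). The uniform monotonicity \(D_\ell\varphi\ge c_0/2\) guarantees a unique zero of \(t\mapsto\varphi(\sigma'+t\ell)\) in the relevant window. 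Thus the geometry reduces to hitting a ball of radius \(\rho\) with a fixed offset.

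\textbf{Arithmetic hitting.} Transfer to \(\T^{r_\ell}\) via \(\Xi\), which is bi-Lipschitz. The orbit \([t\ell]\) corresponds to the linear flow \([t\beta]\). By the Dumas--Fischler filling estimate for the \((\gamma,\tau)\)-Diophantine direction \(\widehat\beta\), every segment \(\{[t\beta]: t\in[s,s+C\rho^{-\tau}]\}\) is \(\rho\)-dense, uniformly in the starting time \(s\). This uniformity follows from translation invariance, since \([s\beta]+\) a dense set is still dense. The filling time scales by \(\|\beta\|\), which only changes constants. Choose \(\rho_n=2^{-n}\rho_0\) and start windows at \(s_n\asymp\rho_n^{-\tau}\), for instance \(s_n=\rho_n^{-\tau}\). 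This yields an entry time \(t_n\in[s_n,s_n+C\rho_n^{-\tau}]\) into the ball, and then a crossing time \(k_n=t_n+O(\rho_n)\) with \([k_n\ell]\in Z_B\) and \(\mathrm{dist}_{H_\ell}([k_n\ell],\theta^\star)\le C\rho_n\). Since \(k_n\le C'\rho_n^{-\tau}\), we get \(\rho_n\le C''k_n^{-1/\tau}\), which is \eqref{eq:quantitative-crossing}. Also \(k_n\ge s_n\to\infty\).

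\textbf{Periodic case.} If \(r_\ell=1\), then \(H_\ell\) is a closed orbit \(\{[t\ell]:t\in[0,P)\}\), with \(P\) the period of the flow. Thus \(\theta^\star=[t_0\ell]\) for some \(t_0\), and \(k_n=t_0+nP\) gives \([k_n\ell]=\theta^\star\in Z_B\) exactly.

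The main obstacle is bookkeeping rather than ideas. One must check that the Dumas--Fischler bound is a uniform-in-start \(\rho\)-density statement for the normalized direction, in the correct norm, with the stated exponent \(\tau\). One must also check that the flow-box Lipschitz constants make the crossing window genuinely \(O(\rho)\), so that no loss in the exponent occurs.
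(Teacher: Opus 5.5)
Your proposal is correct and follows the paper's proof essentially step for step. The shared steps are: a relative flow box at \(\theta^\star\) from positive transversality; a target ball of radius \(\asymp\rho\) placed just behind \(\theta^\star\) along the flow; the uniform-in-start Dumas--Fischler filling time \(C\rho^{-\tau}\), launched from a late time \(\asymp\rho^{-\tau}\); an \(O(\rho)\) flow-box correction landing exactly on \(Z_B\) within \(O(\rho)\) of \(\theta^\star\); elimination of \(\rho\); and exact periodic returns when \(r_\ell=1\). One harmless slip: a ball of radius \(\rho\) at offset \(2\rho\) need not lie in the negative-time slab if \(\Sigma_0\) meets the flow at a small angle, so either shrink the radius to \(c\rho\) as the paper does, or note that a backward crossing within time \(O(\rho)\) works equally well.
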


\begin{proof}
The case \(r_\ell=1\) is periodic.  Assume \(r_\ell\ge2\).  By positive
transversality, a relative flow box exists near \(\theta^\star\).  In its
coordinates \(\Psi(t,\sigma)=\sigma+t\ell\), choose for each sufficiently small
\(\rho>0\) a relative ball \(B^-_\rho\) of radius \(c\rho\) centered at a
point \(\Psi(-c_1\rho,\theta^\star)\).  Every orbit entering
\(B^-_\rho\) crosses the zero-time slice at a point whose intrinsic distance
from \(\theta^\star\) is \(O(\rho)\), and the additional crossing time is
\(O(\rho)\).  After transporting by \(\Xi^{-1}\), the Dumas--Fischler filling
estimate (with \(\gamma\) replaced by \(\min\{\gamma,1/2\}\) if necessary)
applied to the unit direction \(\widehat\beta\) gives an entry within intrinsic
unit-speed time \(C\rho^{-\tau}\), uniformly with respect to the starting
point.  Returning from \(\widehat\beta\) to the physical intrinsic direction
\(\beta\) only rescales time by the fixed factor \(\|\beta\|^{-1}\), which
is absorbed into the constant.

To relate the radius to the actual crossing frequency, start the filling
argument at a prescribed late time \(K_0=c_0\rho^{-\tau}\).  Translation
invariance of the flow gives an entry during
\([K_0,K_0+C\rho^{-\tau}]\).  The subsequent flow-box crossing changes the time
by a bounded amount.  Hence, the exact crossing frequency satisfies
\(c\rho^{-\tau}\le k_\rho\le C'\rho^{-\tau}\) after decreasing \(\rho\) if
necessary.  The crossing itself lies within distance \(C''\rho\) of
\(\theta^\star\).  Choosing \(\rho\downarrow0\) and eliminating \(\rho\) gives
\eqref{eq:quantitative-crossing}.
\end{proof}

\begin{lemma}[Quadratic local leakage]
\label{lem:quadratic-leakage}
Let \(\theta^\star\in\mathcal R_K\) and let \(\theta=[k\ell]\in Z_B\) be an
exact target spectral point on the corresponding local simple eigenbranch,
sufficiently close to \(\theta^\star\), with \(k\) sufficiently large.  If
\(u_\theta\) is the associated normalized physical eigenfunction, then
\begin{equation}
\label{eq:quadratic-leakage}
\|u_\theta\|_{L^2(\G\setminus K)}^2
\le
C\operatorname{dist}(\theta,\theta^\star)^2.
\end{equation}
\end{lemma}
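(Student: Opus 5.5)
The plan is to work entirely on the local simple eigenbranch supplied by \cref{lem:gauss} and to reduce the physical off-support mass to the off-support bond intensities, which are smooth nonnegative functions vanishing at \(\theta^\star\). Concretely, on a neighborhood \(U\) of \(\theta^\star\) where \(1\) remains a simple eigenvalue branch, I take the smooth normalized eigenvector \(a(\theta)\) of \eqref{eq:local-secular-eigenpair}. Since \(U_B(\theta)\) is a polynomial in \(e^{i\theta_e}\) and the eigenvalue is simple, the spectral projector \(P(\theta)\) is real-analytic on \(U\). Hence the intensities \(I_e(a(\theta))=\langle P_eP(\theta)\rangle\)-type expressions, more precisely \(I_e(\theta)=\operatorname{tr}(P_eP(\theta))\), are smooth and independent of the phase normalization of \(a\). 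For \(e\not\subset K\) we have \(I_e(\theta)\ge0\) and \(I_e(\theta^\star)=0\), because \(\theta^\star\in\mathcal R_K\). A smooth nonnegative function attaining the value \(0\) at an interior point has vanishing gradient there. Taylor's theorem with a uniform \(C^2\) bound on a compact subneighborhood therefore gives
\[
I_e(\theta)\le C\,\mathrm{dist}(\theta,\theta^\star)^2,\qquad e\not\subset K .
\]
This is the heart of the quadratic law.

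Next I transfer this bound to physical mass. For an exact target point \(\theta=[k\ell]\in Z_B\cap U\) on the branch, \cref{lem:secular-multiplicity} says the eigenfunction \(w_\theta\) reconstructed from \(a(\theta)\) spans the eigenspace. Its traveling-wave coefficients on edge \(e\) have squared moduli summing to \(I_e(\theta)\), since \(D\) and \(S_B\) preserve the relevant moduli exactly as in the proof of \cref{lem:gauss}. The exact integration in \cref{lem:edgewise-flatness} then gives
\[
\Bigl|\int_e|w_\theta|^2\dd x-\ell_eI_e(\theta)\Bigr|\le k^{-1}I_e(\theta).
\]
Consequently \(\|w_\theta\|^2_{L^2(\G\setminus K)}\le\sum_{e\not\subset K}(\ell_e+k^{-1})I_e(\theta)\le C\,\mathrm{dist}(\theta,\theta^\star)^2\) for \(k\ge1\).

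It remains to control the normalization. The total mass satisfies \(\|w_\theta\|^2\ge\sum_e(\ell_e-k^{-1})I_e(\theta)\). For \(k\ge 2/\min_e\ell_e\) this is at least \(\tfrac12\sum_e\ell_eI_e(\theta)\ge\tfrac12\min_e\ell_e\), because \(\sum_eI_e=\|a\|^2=1\). Dividing by this lower bound yields \eqref{eq:quadratic-leakage} for \(u_\theta=w_\theta/\|w_\theta\|\). Here ``\(k\) sufficiently large'' enters only through \(k\ge 2/\min_e\ell_e\), and ``sufficiently close'' means \(\theta\in U\).

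The main obstacle I expect is the smoothness and nonnegativity argument in the first step. One has to make sure that \(I_e\) is genuinely \(C^2\) on a full neighborhood of \(\theta^\star\) in \(\T^\E\), not just along \(Z_B\). Otherwise the vanishing-gradient argument fails: restricted to the hypersurface \(\{\varphi=0\}\), a minimum would still force a vanishing tangential gradient, but I prefer the cleaner ambient version. The trace formula \(I_e=\operatorname{tr}(P_eP(\theta))\) with the Riesz projector \(P(\theta)=\frac1{2\pi i}\oint(z-U_B(\theta))^{-1}\dd z\) around \(1\) handles this uniformly. One must also check that the branch eigenvector at an exact spectral point is indeed the one giving the physical eigenfunction. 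This holds because \(\varphi(\theta)=0\) on \(Z_B\cap U\), by simplicity of the branch near \(\theta^\star\).
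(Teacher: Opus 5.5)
Your proposal is correct and has the same structure as the paper's proof. Both obtain a quadratic bound on the off-support intensities along the simple branch, use exact edge integration with an \(O(k^{-1}I_e)\) cross term, and bound the normalization from below by \(\sum_e\ell_eI_e(a)\ge\min_e\ell_e\).

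The only difference is how the quadratic gain is obtained. The paper squares the Lipschitz estimate \(\|a(\theta)-a(\theta^\star)\|\le C\operatorname{dist}(\theta,\theta^\star)\), taken after a phase choice, using that \(a^\star\) vanishes on bonds off \(K\). You instead expand the phase-invariant function \(\operatorname{tr}(P_eP(\theta))\) to second order at its interior zero. Your route avoids fixing a gauge but needs \(C^2\) rather than merely Lipschitz regularity of the projector.
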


\begin{proof}
Smoothness of the rank-one spectral projector for the simple branch in
\eqref{eq:local-secular-eigenpair} gives, after a consistent phase choice,
\(\|a(\theta)-a(\theta^\star)\|\le C r\), where
\(r=\operatorname{dist}(\theta,\theta^\star)\).  Since the reference scar has
zero bond amplitudes off \(K\),
\begin{equation}
\label{eq:quadratic-off-intensity}
I_e(a(\theta))\le C r^2,
\qquad e\not\subset K.
\end{equation}
On an edge with traveling-wave coefficients \(A_e,B_e\), exact integration
gives
\[
\int_0^{\ell_e}|A_ee^{ikx}+B_ee^{-ikx}|^2\dd x
=\ell_e I_e+R_e,
\qquad
|R_e|\le k^{-1}I_e.
\]
Thus, the off-support physical mass before normalization is bounded by
\(Cr^2\).  For unit bond vectors, the nonoscillatory total mass is
\(\sum_e\ell_e I_e\ge\min_e\ell_e\); the total oscillatory remainder is
\(O(k^{-1})\).  Hence, for large \(k\) the physical normalization factor is
uniformly bounded above and below.  Combining these bounds with
\eqref{eq:quadratic-off-intensity} proves \eqref{eq:quadratic-leakage}.
\end{proof}

\begin{theorem}[Quantitative scar accessibility]
\label{thm:quantitative-scar}
Let \(K\subsetneq\G\) satisfy \(\operatorname{RAC}(K,\ell)\), and choose
\(\theta^\star\in H_\ell\cap\mathcal R_K\).  If \(r_\ell\ge2\) and the
intrinsic target flow satisfies \eqref{eq:intrinsic-diophantine}, then there is
a normalized exact eigensequence with
\begin{equation}
\label{eq:quantitative-scar}
-\Delta_{\G,\ell}u_n=k_n^2u_n,
\qquad
\|u_n\|_{L^2(\G\setminus K)}^2
\le C k_n^{-2/\tau}.
\end{equation}
\end{theorem}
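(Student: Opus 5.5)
The plan is to combine \cref{thm:quantitative-crossing} with \cref{lem:quadratic-leakage}; essentially no new analysis is needed. Fix \(\theta^\star\in H_\ell\cap\mathcal R_K\) as in the statement. By \cref{cor:transversality}, \(D_\ell\varphi(\theta^\star)>0\), so the relative flow box of \cref{thm:relative-crossing} is available. Under \eqref{eq:intrinsic-diophantine}, \cref{thm:quantitative-crossing} then produces exact target spectral points \(\theta_n=[k_n\ell]\in Z_B\) with \(k_n\to\infty\) and
\[
\mathrm{dist}_{H_\ell}(\theta_n,\theta^\star)\le C k_n^{-1/\tau}.
\]

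First, I would check that these crossings lie on the correct local eigenbranch. The crossings produced in the proof of \cref{thm:quantitative-crossing} belong to the zero-time slice \(\Sigma_0\subset\{\varphi=0\}\cap H_\ell\), where \(\varphi\) is the simple local eigenphase of \cref{lem:gauss}. Hence \(1\) is a simple eigenvalue of \(U_B(\theta_n)\), carried by the branch through \(a(\theta^\star)\). By \cref{lem:secular-multiplicity}, each \(k_n^2\) is a simple eigenvalue of \(-\Delta_{\G,\ell}\). Let \(u_n\) be the normalized physical eigenfunction reconstructed from \(a(\theta_n)\).

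Second, I would compare distances. \(H_\ell\) is a closed subtorus carrying the restricted flat metric, so the ambient distance on \(\T^\E\) is bounded by the intrinsic distance; the bi-Lipschitz comparison via \(\Xi\) only affects constants. Thus \(\mathrm{dist}(\theta_n,\theta^\star)\le Ck_n^{-1/\tau}\). For \(n\) large, \(\theta_n\) lies in the neighborhood where \cref{lem:quadratic-leakage} applies, and \(k_n\) is large. That lemma then gives
\[
\|u_n\|_{L^2(\G\setminus K)}^2\le C\,\mathrm{dist}(\theta_n,\theta^\star)^2\le Ck_n^{-2/\tau},
\]
which is \eqref{eq:quantitative-scar}. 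The finitely many early indices are discarded, or absorbed into the constant.

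The main obstacle is the uniformity in the first step. The point is not just that each crossing is simple, but that a single neighborhood of \(\theta^\star\) works for the whole sequence: one on which both the smooth branch of \cref{lem:gauss} and the constant of \cref{lem:quadratic-leakage} are defined. I would handle this by fixing the flow box once, inside that neighborhood, before running the filling argument, and only letting the radius \(\rho\) shrink within it. The leakage constant then depends only on \(\theta^\star\), on \(\ell\), and on this fixed neighborhood. The constant in the crossing estimate depends only on the Diophantine data and the flow-box geometry.
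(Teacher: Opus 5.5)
Your proposal is correct and matches the paper's proof, which simply inserts the crossing bound of \cref{thm:quantitative-crossing} into \cref{lem:quadratic-leakage}, using local comparability of the intrinsic and ambient distances. Your added checks make explicit what the paper's brief proof leaves implicit: that the crossings lie on the slice \(\{\varphi=0\}\cap H_\ell\) of the simple branch, and that the flow box is fixed once inside a single neighborhood where the leakage constant is valid.
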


\begin{proof}
Apply \cref{thm:quantitative-crossing}.  The intrinsic distance on
\(H_\ell\) and the ambient phase-torus distance are locally comparable under the
fixed embedding, so \eqref{eq:quantitative-crossing} may be inserted into
\cref{lem:quadratic-leakage}.
\end{proof}

\begin{remark}[No quantitative rate from rational independence alone]
\label{rem:no-ri-rate}
Rational independence gives density of the phase orbit but no uniform
polynomial shrinking-target rate.  The estimate
\eqref{eq:quantitative-scar} therefore requires the intrinsic Diophantine
hypothesis; no polynomial rate is asserted under rational independence alone.
\end{remark}

\section{Observability degeneration and graph geometric control}
\label{sec:ggcc-necessity}

Let \(\omega\) be a nonempty open subset of the disjoint union of edge
interiors, and consider, for \(T>0\),
\begin{equation}
\label{eq:controlled-schrodinger}
\begin{cases}
i\partial_t y=-\Delta_{\G,\ell}y+\mathbf 1_\omega h,
&(t,x)\in(0,T)\times\G,\
y(0)=y_0\in L^2(\G).
\end{cases}
\end{equation}
By the Hilbert Uniqueness Method (HUM), exact controllability in time \(T\) is
equivalent to the existence of a constant \(C_T>0\) such that every solution
of the adjoint equation
$$
z(t)=e^{it\Delta_{\G,\ell}}z_0
$$
satisfies the observability estimate
\begin{equation}
\label{eq:schrodinger-observability}
|z_0|*{L^2(\G)}^2
\le
C_T\int_0^T|z(t)|*{L^2(\omega)}^2,\dd t.
\end{equation}

\begin{lemma}[Primitive obstruction to GGCC]
\label{lem:ggcc-obstruction}
If \((\G,\omega)\) fails GGCC, then there exists a simple cycle or a simple
path joining two exterior vertices, denoted by \(K\), such that
\begin{equation}
\label{eq:primitive-uncontrolled}
K\cap\omega=\varnothing.
\end{equation}
\end{lemma}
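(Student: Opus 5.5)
The plan is to derive the lemma from the graph-theoretic characterization of GGCC in \cite{AmmariDucaJolyLeBalch2025} and not to reprove it from scratch. That characterization says, roughly, that GGCC holds if and only if the uncontrolled part of the graph contains no cycle and no path joining two exterior vertices. To make this self-contained, I would work with the uncontrolled subgraph. Let \(\G_\omega\) be the closed subgraph formed by all edges \(e\in\E\) with \(e\cap\omega=\varnothing\), together with their endpoints. Since \(\omega\) is a union of open subsets of edge interiors, an edge meeting \(\omega\) is controlled somewhere along it. A primitive \(K\) made of edges of \(\G_\omega\) then satisfies \eqref{eq:primitive-uncontrolled}.

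First I will recall the GGCC definition and its equivalent form from \cite{AmmariDucaJolyLeBalch2025}. The form I need is: GGCC fails if and only if some connected component \(F\) of \(\G_\omega\) has the structural property that prevents the forest/one-exterior-leaf reduction. Concretely, either \(F\) contains a cycle, or \(F\) is a tree containing at least two exterior vertices of \(\G\).

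Second, I split into cases.
\begin{itemize}
\item If \(F\) contains a cycle, then a shortest cycle in \(F\) is simple. By the paper's conventions, this includes one-edge loops and two-edge cycles made of parallel edges. Take \(K\) to be that cycle.
\item Otherwise \(F\) is a tree containing two distinct exterior vertices \(a,b\) of \(\G\). The unique tree path \(P(a,b)\subset F\) is simple and joins two exterior vertices. Take \(K=P(a,b)\).
\end{itemize}
In both cases \(K\subset\G_\omega\), so \(K\cap\omega=\varnothing\).

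The main obstacle is matching the exact formulation of GGCC in \cite{AmmariDucaJolyLeBalch2025} with the dichotomy above. Their condition is phrased in terms of removing \(\omega\) and the components of the resulting graph. Two points need care. Edges only partially covered by \(\omega\) do split in their formulation, and I exclude them from \(\G_\omega\) entirely, which is harmless because \(K\) only needs to avoid \(\omega\). Components may also touch interior vertices of \(\G\) through cut edges. Should their characterization be stated for cut components with new degree-one endpoints inside controlled edges, I would argue directly. Suppose \(F\) is an uncontrolled component that is a tree with at most one exterior vertex of \(\G\). Then a unique continuation, or "peeling from the leaves," argument shows controllability on \(F\). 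Contrapositively, GGCC failure yields a component that either has a cycle or is a tree with at least two genuine exterior vertices. This is exactly the input required in the second step.
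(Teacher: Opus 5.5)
Your main line is correct and is essentially the paper's proof: the paper invokes \cite[Theorem~1.2]{AmmariDucaJolyLeBalch2025} in the form ``GGCC holds if and only if every cycle and every exterior-to-exterior path meets \(\omega\)'', negates it, and removes repetitions to obtain a simple cycle or path, which is what your shortest-cycle and unique-tree-path step does. Drop your final fallback paragraph, since it is unnecessary and its logic does not work: GGCC is a purely topological condition, so reconciling formulations is also topological (a cycle, or a path between two vertices, contained in \(\G\setminus\omega\) is a union of whole edges each disjoint from \(\omega\), hence lies in \(\G_\omega\)---this, rather than ``\(K\) only needs to avoid \(\omega\)'', is why discarding partially controlled edges is harmless), whereas the contrapositive of a unique-continuation or controllability argument starts from non-controllability, not from failure of GGCC, and so does not give the implication you need.
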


\begin{proof}
By \cite[Theorem~1.2]{AmmariDucaJolyLeBalch2025}, GGCC holds if and only if every
cycle and every exterior-to-exterior path meets \(\omega\).  Negation gives a
cycle or path disjoint from \(\omega\), and removing repetitions gives a simple
one.  Degree-two subdivisions remain part of the original metric graph.
\end{proof}

\begin{theorem}[Metric-specific necessity through regular accessibility]
\label{thm:metric-specific-necessity}
Assume GGCC fails.  If there is an uncontrolled proper primitive obstruction
\(K\) satisfying \(\operatorname{RAC}(K,\ell)\), then
\eqref{eq:controlled-schrodinger} is not exactly controllable in any time
\(T>0\).\end{theorem}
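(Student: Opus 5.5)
The plan is to contradict the observability inequality \eqref{eq:schrodinger-observability} by testing it on exact eigenfunctions that concentrate on $K$. Let $K$ be the uncontrolled proper primitive obstruction, so $K\cap\omega=\varnothing$ by \eqref{eq:primitive-uncontrolled}, and assume $\operatorname{RAC}(K,\ell)$ holds. \Cref{thm:arithmetic-scar-accessibility} then gives normalized exact eigenfunctions $u_n$ with $-\Delta_{\G,\ell}u_n=k_n^2u_n$, $k_n\to\infty$, and $\|u_n\|_{L^2(\G\setminus K)}\to0$. The order of steps is: obtain this scarred sequence, reduce the time integral in \eqref{eq:schrodinger-observability} to a static quantity, and pass to the limit.

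For the adjoint solution with initial datum $z_0=u_n$ we have $z(t)=e^{-itk_n^2}u_n$. Hence $\|z(t)\|_{L^2(\omega)}=\|u_n\|_{L^2(\omega)}$ for every $t$, and
\[
\int_0^T\|z(t)\|_{L^2(\omega)}^2\dd t=T\|u_n\|_{L^2(\omega)}^2 .
\]
Since $\omega$ is an open subset of the disjoint union of edge interiors and is disjoint from $K$, we have $\omega\subseteq\G\setminus K$ up to a null set (the finitely many vertices carry no mass). Therefore $\|u_n\|_{L^2(\omega)}\le\|u_n\|_{L^2(\G\setminus K)}\to0$.

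If \eqref{eq:schrodinger-observability} held with some constant $C_T$, then $1=\|u_n\|^2_{L^2(\G)}\le C_T T\|u_n\|^2_{L^2(\G\setminus K)}\to0$, which is a contradiction. By HUM, exact controllability in time $T$ fails. Because $T>0$ was arbitrary, this proves the theorem.

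I expect no step to be a real obstacle, since the spectral work is already done in \cref{thm:arithmetic-scar-accessibility}. Two points need care. First, the statement of $\omega\cap K=\varnothing$ must be read at the level of edge subsets: an edge of $K$ contains no point of $\omega$, so every point of $\omega$ lies on an edge outside $K$. Second, the complex-valued eigenfunctions are used directly as test data, and the HUM equivalence is stated for complex $L^2(\G)$, so this causes no difficulty.
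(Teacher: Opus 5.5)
Your proposal is correct and follows the paper's proof essentially line for line. You invoke the arithmetic scar-accessibility theorem, use the stationary solution $z_n(t)=e^{-itk_n^2}u_n$ to reduce the observation integral to $T\|u_n\|_{L^2(\omega)}^2\le T\|u_n\|_{L^2(\G\setminus K)}^2\to0$, and contradict the observability inequality for every $T>0$.
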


\begin{proof}
For the obstruction satisfying \eqref{eq:primitive-uncontrolled}, \cref{thm:arithmetic-scar-accessibility} gives
normalized eigenfunctions \(u_n\) with
\(\|u_n\|_{L^2(\G\setminus K)}\to0\).  Since
\(\omega\subset\G\setminus K\),
\(\|u_n\|_{L^2(\omega)}\to0\).  For fixed \(T>0\), set
\(z_n(t)=e^{-itk_n^2}u_n\).  Then
\begin{equation}
\label{eq:necessity-observation-integral}
\int_0^T\|z_n(t)\|_{L^2(\omega)}^2\dd t
=T\|u_n\|_{L^2(\omega)}^2
\longrightarrow0,
\end{equation}
while \(\|z_n(0)\|=1\).  Thus, \eqref{eq:necessity-observation-integral} contradicts
\eqref{eq:schrodinger-observability}.  Since \(T>0\) was arbitrary, exact controllability fails for every positive time.
\end{proof}

\subsection{Quantitative finite-frequency observation}

For \(\Lambda>0\), define the best truncated observation level
\begin{equation}
\label{eq:truncated-observation-level}
\mathfrak o_T(\Lambda)
:=
\inf_{\substack{
 z_0\in\mathbf 1_{[0,\Lambda]}(-\Delta_{\G,\ell})L^2(\G)\\
 \|z_0\|_{L^2(\G)}=1}}
\int_0^T
\|e^{it\Delta_{\G,\ell}}z_0\|_{L^2(\omega)}^2\dd t.
\end{equation}

\begin{corollary}[Polynomial degeneration of truncated observation]
\label{cor:quantitative-observation}
Assume GGCC fails and let \(K\subsetneq\G\) be an uncontrolled primitive
obstruction satisfying the hypotheses of \cref{thm:quantitative-scar}.  Then
along \(\Lambda_n=k_n^2\),
\begin{equation}
\label{eq:quantitative-observation}
\mathfrak o_T(\Lambda_n)
\le C T\Lambda_n^{-1/\tau}.
\end{equation}
Equivalently, whenever the inverse is interpreted as \(+\infty\) at zero,
\begin{equation}
\label{eq:inverse-observation-growth}
\mathfrak o_T(\Lambda_n)^{-1}
\ge c T^{-1}\Lambda_n^{1/\tau}.
\end{equation}
\end{corollary}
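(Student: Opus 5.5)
The corollary follows by testing the infimum in \eqref{eq:truncated-observation-level} on the exact eigensequence from \cref{thm:quantitative-scar}.

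Let \(u_n\) and \(k_n\to\infty\) be the normalized eigenfunctions that theorem provides for the uncontrolled obstruction \(K\). Set \(\Lambda_n=k_n^2\). Since \(-\Delta_{\G,\ell}u_n=\Lambda_nu_n\), the spectral projector gives \(\mathbf 1_{[0,\Lambda_n]}(-\Delta_{\G,\ell})u_n=u_n\). So \(z_0=u_n\) is an admissible competitor in the infimum at level \(\Lambda_n\), and \(\mathfrak o_T(\Lambda_n)\) is bounded by its observation integral.

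For this initial datum the evolution is \(e^{it\Delta_{\G,\ell}}u_n=e^{-it\Lambda_n}u_n\). The phase has modulus one, so the integrand is constant in time and
\[
\int_0^T\|e^{it\Delta_{\G,\ell}}u_n\|_{L^2(\omega)}^2\dd t=T\|u_n\|_{L^2(\omega)}^2 .
\]
Because \(K\) is uncontrolled, \eqref{eq:primitive-uncontrolled} gives \(\omega\subset\G\setminus K\). Hence \(\|u_n\|_{L^2(\omega)}^2\le\|u_n\|_{L^2(\G\setminus K)}^2\). By \eqref{eq:quantitative-scar} this is at most \(Ck_n^{-2/\tau}=C\Lambda_n^{-1/\tau}\). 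Combining these bounds yields \eqref{eq:quantitative-observation}.

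The inverse form \eqref{eq:inverse-observation-growth} comes from inverting the inequality with \(c=C^{-1}\), using the convention \(1/0=+\infty\) when the level vanishes.

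The only points needing care concern the set \(\omega\) and the sequence. First, \(\omega\) lies in the edge interiors, so "disjoint from \(K\)" means it sits inside the off-support edges up to a null set of vertices, which does not affect \(L^2\) norms. Second, the constant \(C\) must be uniform along the sequence, which \cref{thm:quantitative-scar} already provides. I expect neither to be a genuine obstacle; the substantive content lives entirely in \cref{thm:quantitative-scar}.
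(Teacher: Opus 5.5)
Your proposal is correct and matches the paper's proof. Both test the infimum with the exact eigenfunction \(u_n\) from \cref{thm:quantitative-scar}, use stationarity to reduce the observation integral to \(T\|u_n\|_{L^2(\omega)}^2\le T\|u_n\|_{L^2(\G\setminus K)}^2\le CT\Lambda_n^{-1/\tau}\), and then invert.
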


\begin{proof}
Use the normalized eigenfunction \(u_n\) from
\eqref{eq:quantitative-scar} as a competitor in
\eqref{eq:truncated-observation-level}.  Since \(K\cap\omega=\varnothing\),
\[
\|u_n\|_{L^2(\omega)}^2
\le\|u_n\|_{L^2(\G\setminus K)}^2
\le Ck_n^{-2/\tau}.
\]
Stationarity of an eigenfunction under the free Schr\"odinger evolution then
gives \eqref{eq:quantitative-observation}; inversion gives
\eqref{eq:inverse-observation-growth}.
\end{proof}

\subsection{Generic GGCC characterization}

\begin{theorem}[GGCC characterization for rationally independent metrics]
\label{thm:main-control}
Let \(\ell\in(0,\infty)^\E\) be rationally independent.  Then the following are
equivalent:
\begin{enumerate}[label=(\roman*)]
\item \((\G,\omega)\) satisfies GGCC;
\item for every \(T>0\), the free internally controlled Schr\"odinger equation
\eqref{eq:controlled-schrodinger} is exactly controllable in time \(T\).
\end{enumerate}
If GGCC fails, exact controllability fails for every \(T>0\).
\end{theorem}

\begin{proof}
If GGCC holds, small-time exact controllability follows from
\cite[Theorem~1.9]{AmmariDucaJolyLeBalch2025}, with zero potential.  If GGCC
fails, \cref{lem:ggcc-obstruction} gives an uncontrolled primitive \(K\).  Since \(\omega\neq\varnothing\) and \(K\cap\omega=\varnothing\), the obstruction is proper.  Rational independence and \cref{cor:ri-accessibility} give
\(\operatorname{RAC}(K,\ell)\), so \cref{thm:metric-specific-necessity} applies.
\end{proof}

\begin{corollary}[Generic metric characterization]
\label{cor:generic-control}
Fix the combinatorial graph, the exterior Dirichlet--Neumann assignment, and the
nonempty open control geometry \(\omega\).  For Lebesgue-almost every
\(\ell\in(0,\infty)^\E\), GGCC holds if and only if
\eqref{eq:controlled-schrodinger} is exactly controllable in every time
\(T>0\).
\end{corollary}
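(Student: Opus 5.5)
The plan is to deduce the statement from \cref{thm:main-control} together with the fact that rationally dependent length vectors form a Lebesgue-null subset of \((0,\infty)^\E\). Fix the combinatorial graph, the exterior Dirichlet--Neumann assignment, and the control set \(\omega\). GGCC is purely graph-theoretic: by \cite[Theorem~1.2]{AmmariDucaJolyLeBalch2025}, as recalled in \cref{lem:ggcc-obstruction}, it only asks whether every cycle and every exterior-to-exterior path meets \(\omega\). So as \(\ell\) varies, the truth value of GGCC stays fixed, provided \(\omega\) is read as a fixed subset of the combinatorial edges, for instance in normalized edge coordinates, so that it still makes sense for every \(\ell\). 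This is the only interpretive point I would make explicit. Whichever reading is adopted, the equivalence in \cref{thm:main-control} holds pointwise in \(\ell\). Hence it suffices to show that the exceptional set is contained in a null set.

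By \cref{def:irrational}, the set of rationally dependent vectors is
\[
N=\bigcup_{m\in\Z^\E\setminus\{0\}}\{\ell\in(0,\infty)^\E: m\cdot\ell=0\}.
\]
Each set in this union is the intersection of \((0,\infty)^\E\) with a hyperplane through the origin, because \(m\neq0\). Hence each has Lebesgue measure zero in \(\R^\E\). Since \(\Z^\E\setminus\{0\}\) is countable, \(N\) is null by countable subadditivity.

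For every \(\ell\notin N\), \cref{thm:main-control} gives the equivalence of GGCC with exact controllability in every \(T>0\). Its proof combines the sufficiency result \cite[Theorem~1.9]{AmmariDucaJolyLeBalch2025} with \cref{cor:ri-accessibility} and \cref{thm:metric-specific-necessity}. Therefore the equivalence holds for every \(\ell\) outside the null set \(N\), which is the claim.

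No step is a real obstacle. The only care needed is the observation above that the control geometry is held fixed in a metric-independent way. If \(|\E|=1\), rational dependence of a positive scalar is impossible, so \(N=\varnothing\) and the statement holds for every \(\ell\).
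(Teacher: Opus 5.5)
Your proposal is correct and is essentially the paper's proof. Both write the rationally dependent lengths as a countable union of hyperplane sections \(\{m\cdot\ell=0\}\), \(m\neq0\), conclude that this set is Lebesgue-null, and apply \cref{thm:main-control} off that set. Your added remark, that \(\omega\) must be read in a metric-independent way so that the truth value of GGCC does not change with \(\ell\), is a sensible clarification that the paper leaves implicit.
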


\begin{proof}
The rationally dependent metrics are
\begin{equation}
\label{eq:rational-dependence-hyperplanes}
\bigcup_{m\in\Z^\E\setminus\{0\}}
\{\ell\in(0,\infty)^\E:m\cdot\ell=0\},
\end{equation}
The set in \eqref{eq:rational-dependence-hyperplanes} is a countable union of proper hyperplane sections and hence a Lebesgue-null set.
Apply \cref{thm:main-control}.
\end{proof}

\begin{remark}[Exceptional arithmetic metrics]
\label{rem:exceptional-metrics}
The generic qualification is essential: arithmetic exceptional configurations
with failure of GGCC but exact controllability are exhibited in
\cite{AmmariDucaJolyLeBalch2025}.  The current theory refines the role of
arithmetic by showing that rational independence is sufficient but not
structurally necessary for the regular-scar obstruction: the relevant
metric-specific condition is intersection of \(H_\ell\) with the regular scar
stratum of an uncontrolled primitive support.  No complete classification of
all exceptional metrics is asserted because singular secular accessibility is
not analyzed.
\end{remark}

\section{Discussion and scope}
\label{sec:discussion}

The results distinguish three roles that are not visible in a purely generic
formulation.  The graph topology determines the primitive obstructions to GGCC;
the secular geometry determines the regular scar strata associated with those
obstructions; and the target metric determines which of these strata are
accessible through its phase-orbit closure.  Rational independence is therefore
a convenient sufficient condition ensuring full phase-orbit closure, rather
than the intrinsic metric-specific accessibility condition.  Under the stronger
intrinsic Diophantine hypothesis, the same mechanism yields the quantitative
observation degeneration in \eqref{eq:quantitative-observation}.

The scope of these conclusions is deliberately limited.  The condition
\(\operatorname{RAC}(K,\ell)\) concerns accessibility through the regular-scar
mechanism and is not asserted to characterize accessibility through singular
secular strata.  Likewise, rational independence alone is not used to infer a
polynomial recurrence rate, and no necessity result is asserted here for
nonzero bounded potentials.  The generic free-Schr\"odinger characterization
in \cref{thm:main-control} thus follows as a control-theoretic consequence of
the fixed-metric spectral theory developed above.



\bibliographystyle{plain}
\bibliography{refs}

\end{document}